\documentclass{amsart}
\usepackage{amsmath}
\usepackage{amscd}
\usepackage{amssymb}
\usepackage{amsfonts}
\newtheorem{theorem}{Theorem}[section]

\newtheorem{corollary}[theorem]{Corollary}

\theoremstyle{definition}

\theoremstyle{remark}
\newtheorem{remark}[theorem]{Remark}
\numberwithin{equation}{section}
%    Absolute value notation

%    Blank box placeholder for figures (to avoid requiring any
%    particular graphics capabilities for printing this document).

\begin{document}
\title[Differential Harnack inequalities for nonlinear heat equations]
{Differential Harnack inequalities for nonlinear heat equations with
potentials\\ under the Ricci flow}

\author{Jia-Yong Wu}
\address{Department of Mathematics, Shanghai Maritime University,
Haigang Avenue 1550, Shanghai 201306, P. R. China}

\email{jywu81@yahoo.com}

\thanks{This work is partially supported by the NSFC (No.11101267)
and the Science and Technology Program of Shanghai Maritime University
(No. 20120061).}

\subjclass[2000]{Primary 53C44.}

\dedicatory{}

\date{\today}

\keywords{Harnack inequality; Interpolated Harnack
inequality; Nonlinear heat equation; Nonlinear backward heat
equation; Ricci flow.}

\begin{abstract}
We prove several differential Harnack inequalities for positive
solutions to nonlinear backward heat equations with different potentials coupled
with the Ricci flow. We also derive an interpolated Harnack inequality for the
nonlinear heat equation under the $\varepsilon$-Ricci flow on a closed surface.
These new Harnack inequalities extend the previous differential Harnack
inequalities for linear heat equations with potentials under the Ricci flow.
\end{abstract}
\maketitle

% ------------------------------------------------------------------------

\section{Introduction and main results}\label{sec1}

\subsection{Background}
The study of differential Harnack estimates for parabolic equations
originated with the work of P. Li and S.-T. Yau~\cite{[Li-Yau]},
who first proved a gradient estimate for the heat equation via the
maximum principle (though a precursory form of their estimate
appeared in \cite{[ArBe]}). Using their gradient estimate, the same
authors derived a classical Harnack inequality by integrating
the gradient estimate along space-time paths. This result was generalized
to Harnack inequalities for some nonlinear heat-type equations in \cite{[Yau1]} and
for some non-self-adjoint evolution equations in~\cite{[Yau2]}. Recently, J. Li and
X. Xu \cite{[LiXu]} gave sharper local estimates than previous results for the heat equation
on Riemannian manifolds with Ricci curvature bounded below. Surprisingly,
R. Hamilton employed similar techniques to obtain Harnack inequalities for the
Ricci flow~\cite{[Ham2]}, and the mean curvature flow~\cite{[Ham4]}. In
dimension two, a differential Harnack estimate for the positive scalar curvature
was proved in \cite{[Ham1]}, and then extended by B. Chow ~\cite{[Chow0]} when the
scalar curvature changes sign. Similar techniques were used to obtain the Harnack
inequalities for the Gauss curvature flow~\cite{[Chow1]} and the Yamabe flow
~\cite{[Chow2]}. H.-D. Cao~\cite{[Cao]} proved a Harnack inequality for the
K\"ahler-Ricci flow. B. Andrews~\cite{[Andrews]} derived several Harnack inequalities
for general curvature flows of hypersurfaces. Chow and Hamilton \cite{[ChHam]}
gave extensions of the Li-Yau Harnack inequality, which they called constrained
and linear Harnack inequalities. For more detailed discussion, we refer the
interested reader to Chapter 10 of~\cite{[CLN]}.

Hamilton~\cite{[Ham3]} also generalized the Li-Yau Harnack inequality to a matrix
Harnack form on a class of Riemannian manifolds with nonnegative sectional curvature.
This result was extended to the constrained matrix Harnack inequalities in
\cite{[ChHam]}. H.-D. Cao and L. Ni ~\cite{[CaoNi]} proved a matrix Harnack
estimate for the heat equation on K\"ahler  manifolds. Chow and Ni~\cite{[Ni1]} proved
a matrix Harnack estimate for K\"ahler-Ricci flow using interpolation techniques
from~\cite{[Chow3]}.

In another direction, differential Harnack inequalities for (backward) heat-type
equations coupled with the Ricci flow have become an important object, which
can be traced back to \cite{[Ham1]}. This subject was further explored by
Chow~\cite{[Chow3]}, Chow and Hamilton \cite{[ChHam]}, Chow and D. Knopf \cite{[ChKn]},
and H.-B. Cheng \cite{[HBCheng]}, etc. Perhaps the most spectacular result is
G. Perelman's~\cite{[Perelman]} differential Harnack inequality for the
fundamental solution to the backward heat equation coupled with the Ricci flow
without any curvature assumption. Perelman's Harnack inequality has many
important applications (it is essential in proving pseudolocality theorems),
and it has been extended by X. Cao \cite{[Caox]} and independently by
S.-L. Kuang and Qi S. Zhang~\cite{[KuZh]}. Those authors proved a differential
Harnack inequality for all positive solutions to the backward heat equation
under the Ricci flow on closed manifolds with nonnegative scalar curvature.
X. Cao and Qi S. Zhang~\cite{[CaoZhq]} have established Gaussian upper and
lower bounds for the fundamental solution to the backward heat equation
under the Ricci flow.

On the subject of differential Harnack inequalities for the linear heat equation
coupled with the Ricci flow, there have been many important contributions; see,
for example,~\cite{[BaCaoPu]},~\cite{[CaoxHa]},~\cite{[ChTaYu]},~\cite{CCG3},
~\cite{[Guen]},~\cite{[Liu]},~\cite{[WuZheng]} and~\cite{[Zhang]}.

In recent years there has been increasing interest in the study of the nonlinear
heat-type equations coupled with the Ricci flow. A nice example of a nonlinear
heat equation, introduced by L. Ma~\cite{[Ma]}, is
\begin{equation}\label{form}
\frac{\partial}{\partial t}f=\Delta f-af\ln f-bf,
\end{equation}
where $a$ and $b$ are real constants. Ma first proved a local gradient estimate
for positive solutions to the corresponding elliptic equation
\begin{equation}\label{ellform}
\Delta f-af\ln f-bf=0
\end{equation}
on a complete manifold with a fixed metric. Indeed, F. R. K. Chung
and S.-T. Yau ~\cite{[ChuYau]} observed that equation\eqref{ellform}
is linked with the gross logarithmic Sobolev inequality. They also
established a logarithmic Harnack inequality for this equation when
$a<0$. Y. Yang~\cite{[Yang]} derived local gradient estimates for positive solutions
to \eqref{form} on a complete manifold with a fixed metric; see also \cite{[ChCh]},
\cite{[HuangMa]}, \cite{[Wu1]} and \cite{[Wu2]}. Yang's result has been generalized
by L. Ma~\cite{[Ma2], [Ma3]}, who obtained Hamilton and new Li-Yau type gradient
estimates for the nonlinear heat equation \eqref{form}, and also by S.-Y. Hsu~\cite{[Hsu]},
who proved local gradient estimates for the nonlinear heat equation \eqref{form} under the Ricci
flow, similar to the gradient estimates of~\cite{[Yang]} for the fixed metric case

We remind the reader that equations \eqref{form} and \eqref{ellform} often appear
in geometric evolution equations, and are also closely related to the gradient
Ricci solitons. See, for example, \cite{[CaoZhang]} and \cite{[Ma]} for nice
explanations on this subject.

Very recently, X. Cao and Z. Zhang~\cite{[CaoZhang]} used the argument from
\cite{[CaoxHa]} to prove an interesting differential Harnack inequality for positive
solutions to the forward nonlinear heat equation
\begin{equation}\label{form3}
\frac{\partial}{\partial t}f=\Delta f-f\ln f+Rf
\end{equation}
coupled with the Ricci flow equation
\begin{equation}\label{RF}
\frac{\partial}{\partial t}g_{ij}=-2R_{ij}
\end{equation}
on a closed manifold. Here the symbol $\Delta$, $R$ and $R_{ij}$ are
the Laplacian, scalar curvature and Ricci curvature of the metric
$g(t)$ moving under the Ricci flow.

\subsection{Main results}
In this paper, we will be concerned with general time-dependent
nonlinear backward heat equations of the type \eqref{form} with
different potentials on closed manifolds under the Ricci
flow.

Before studying nonlinear backward heat equations, we
first study the nonlinear forward heat equation \eqref{form3} with the
metric evolving under the Ricci flow. Suppose $(M,g(t))$,
$t\in[0,T)$, is a solution to the $\varepsilon$-Ricci flow ($\varepsilon\geq0$)
\begin{equation}\label{psRF}
\frac{\partial}{\partial t}g_{ij}=-\varepsilon Rg_{ij}
\end{equation}
on a closed surface. Let $f$ be a positive solution
to the nonlinear forward heat equation with potential $\varepsilon R$, that is,
\begin{equation}\label{foreq1}
\frac{\partial}{\partial t}f=\Delta f-f\ln f+\varepsilon Rf.
\end{equation}
In this case, we can derive a
new differential interpolated Harnack inequality, which is
originated with B. Chow~\cite{[Chow3]}.

\begin{theorem}\label{interposur}
Let $(M,g(t))$, $t\in[0,T)$, be a solution to the
$\varepsilon$-Ricci flow \eqref{psRF}
on a closed surface with $R>0$. Let $f$ be a positive solution to
the nonlinear heat equation \eqref{foreq1},
$u=-\ln f$ and $H_\varepsilon=\Delta u-\varepsilon R$.
Then for all time $t\in(0,T)$,
\[
H_\varepsilon\leq \frac 1t,
\]
i.e.,
\[
\frac{\partial}{\partial t}\ln f-|\nabla\ln f|^2+\ln f+\frac
1t=\Delta\ln f+\varepsilon R+\frac 1t\geq 0.
\]
\end{theorem}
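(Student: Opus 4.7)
The plan is to apply the parabolic maximum principle to the auxiliary function $\Phi(x,t):=tH_\varepsilon(x,t)$ on the closed surface $M$ and conclude $\Phi\le 1$ on $(0,T)$. First I would translate the heat equation \eqref{foreq1} into an equation for $u=-\ln f$; a direct computation gives
\[
\partial_t u = \Delta u - |\nabla u|^2 - u - \varepsilon R.
\]
Since the $\varepsilon$-Ricci flow $\partial_t g_{ij} = -\varepsilon R g_{ij}$ is purely conformal on a surface, the identity $g^{ij}\partial_t\Gamma_{ij}^k = 0$ holds and hence $\partial_t(\Delta u) = \varepsilon R\,\Delta u + \Delta(\partial_t u)$; the variation formula for scalar curvature yields $\partial_t R = \varepsilon(\Delta R + R^2)$.

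Combining these with the surface Bochner identity $\Delta|\nabla u|^2 = 2|\nabla^2 u|^2 + 2\langle\nabla u,\nabla\Delta u\rangle + R|\nabla u|^2$ and with $\nabla\Delta u = \nabla H_\varepsilon + \varepsilon\nabla R$ should produce an evolution identity of the shape
\[
\partial_t H_\varepsilon - \Delta H_\varepsilon + 2\langle\nabla u,\nabla H_\varepsilon\rangle = (\varepsilon R-1)H_\varepsilon - \varepsilon R - 2|\nabla^2 u|^2 - R|\nabla u|^2 + (\text{cross terms}).
\]
Next I would invoke the two-dimensional pointwise inequality $2|\nabla^2 u|^2 \ge (\Delta u)^2 = (H_\varepsilon+\varepsilon R)^2$ to introduce a quadratic $-H_\varepsilon^2 - 2\varepsilon R H_\varepsilon - \varepsilon^2 R^2$ on the right-hand side, and then use $R>0$ to sign the terms $-\varepsilon R$ and $-R|\nabla u|^2$. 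The max-principle step is standard: if $\Phi>1$ somewhere, pick a first-contact point $(x_0,t_0)$ with $t_0>0$; then $\nabla H_\varepsilon=0$, $\Delta H_\varepsilon\le 0$, and $\partial_t H_\varepsilon\ge -H_\varepsilon/t_0$. Substituting and dividing by $H_\varepsilon>1/t_0>0$ should yield $H_\varepsilon\le 1/t_0-1<1/t_0$, contradicting $H_\varepsilon(x_0,t_0)>1/t_0$.

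The principal obstacle is the treatment of the indefinite-sign cross terms $-2\varepsilon\langle\nabla u,\nabla R\rangle$ and $-\varepsilon^2\Delta R$ that couple $H_\varepsilon$ to the geometry via $\nabla\Delta u = \nabla H_\varepsilon + \varepsilon\nabla R$ and via $\partial_t R$. They cannot be dropped, and making the estimate close requires that they be absorbed into the quadratic coming from Bochner. I expect to exploit the relation $\nabla H_\varepsilon=0$ at the maximum point (which forces $\varepsilon\nabla R = \nabla\Delta u$), together with the positivity of $R$ and the structural identity $\Delta u = H_\varepsilon + \varepsilon R$, to make the signs line up after a careful regrouping; verifying this algebraic cancellation is the most delicate part of the argument, whereas the maximum-principle conclusion itself is routine.
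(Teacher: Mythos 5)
Your setup (the equation for $u$, the evolution of $\Delta$ and of $R$ under the conformal flow, the surface Bochner identity, and the quadratic term from the Hessian) matches the paper's computation, and your first-contact maximum-principle argument for $tH_\varepsilon\le 1$ would indeed close \emph{if} you had the right differential inequality. But you have correctly located the obstacle and then left it unresolved, and the resolution you sketch will not work. The indefinite terms are $-2\varepsilon\langle\nabla R,\nabla u\rangle$ (from $\nabla\Delta u=\nabla H_\varepsilon+\varepsilon\nabla R$) and $-\varepsilon\partial_tR=-\varepsilon^2(\Delta R+R^2)$, whose $-\varepsilon^2\Delta R$ part has no sign. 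The relation $\nabla H_\varepsilon=0$ at the maximum point gives you no control over either of these: it converts $-2\nabla\Delta u\cdot\nabla u$ into exactly $-2\varepsilon\nabla R\cdot\nabla u$, which is still indefinite, and it says nothing about $\Delta R$. No amount of pointwise regrouping using $\Delta u=H_\varepsilon+\varepsilon R$ can bound $-\varepsilon^2\Delta R$ from above, because $\Delta R$ is not determined by $H_\varepsilon$ and its derivatives at that point.

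The missing ingredient is an external estimate: Chow's interpolated trace Harnack inequality for the $\varepsilon$-Ricci flow on a closed surface with $R>0$, namely
\[
\frac{\partial \ln R}{\partial t}-\varepsilon|\nabla\ln R|^2=\varepsilon(\Delta\ln R+R)\ge-\frac1t .
\]
The paper's proof first completes the square in the gradient terms,
\[
-R|\nabla u|^2-2\varepsilon\langle\nabla R,\nabla u\rangle
=-R\left|\nabla u+\varepsilon\nabla\ln R\right|^2+\varepsilon^2\frac{|\nabla R|^2}{R},
\]
discards the manifest nonpositive square (here $R>0$ is used), and then observes that the leftover combines with $-\varepsilon\partial_tR$ as
\[
\varepsilon^2 R|\nabla\ln R|^2-\varepsilon R\,\frac{\partial\ln R}{\partial t}
=-\varepsilon R\left(\frac{\partial\ln R}{\partial t}-\varepsilon|\nabla\ln R|^2\right)\le\frac{\varepsilon R}{t},
\]
by Chow's inequality. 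This is what produces the closed inequality
$\partial_tH_\varepsilon\le\Delta H_\varepsilon-H_\varepsilon^2-2\nabla H_\varepsilon\cdot\nabla u-(\varepsilon R+1)H_\varepsilon+\frac{\varepsilon R}{t}-\varepsilon R$, to which the maximum principle applies. Without importing Chow's Harnack estimate (itself a separate maximum-principle theorem requiring $R>0$), your argument cannot be completed; so the gap is not a matter of "careful regrouping" but of a missing key lemma.
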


In Theorem \ref{interposur}, if we take $\varepsilon=0$, we can get
the following differential Harnack inequality for the nonlinear heat
equation on closed surfaces with a fixed metric:
\begin{corollary}\label{interfix}
If $f: M\times[0,T)\to\mathbb{R}$, is a positive solution to the
nonlinear heat equation
\[
\frac{\partial}{\partial t}f=\Delta f-f\ln f
\]
on a closed surface $(M,g)$ with
$R>0$, then for all time $t\in(0,T)$,
\[
\frac{\partial}{\partial t}\ln f-|\nabla\ln f|^2+\ln f+\frac
1t=\Delta\ln f+\frac 1t\geq 0.
\]
\end{corollary}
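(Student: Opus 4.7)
The plan is to apply the parabolic maximum principle to the Harnack quantity $F := tH_\varepsilon - 1$ on $M \times (0, T)$, showing that $F$ cannot attain a positive interior maximum, whence $H_\varepsilon \leq 1/t$. This follows the general scheme of Chow's interpolated Harnack inequality on surfaces for the linear heat equation and the Cao--Zhang Harnack inequality for the nonlinear heat equation \eqref{form3} under the Ricci flow.

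First I would translate \eqref{foreq1} into an equation for $u = -\ln f$. Using $\nabla f = -f\nabla u$ and $\Delta f = f|\nabla u|^2 - f\Delta u$, a direct computation yields
\[
\partial_t u = \Delta u - |\nabla u|^2 - u - \varepsilon R.
\]
To evolve $H_\varepsilon = \Delta u - \varepsilon R$, I would exploit the conformal nature of the $\varepsilon$-Ricci flow: $\partial_t g^{ij} = \varepsilon R g^{ij}$, and a direct check shows $g^{ij}\partial_t \Gamma^k_{ij} = 0$, hence $\partial_t(\Delta u) = \Delta(\partial_t u) + \varepsilon R\Delta u$. Combined with the surface evolution $\partial_t R = \varepsilon\Delta R + \varepsilon R^2$, this yields
\[
(\partial_t - \Delta)H_\varepsilon = -\Delta|\nabla u|^2 + (\varepsilon R - 1)\Delta u - \varepsilon^2\Delta R - \varepsilon^2 R^2.
\]

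Next I would apply Bochner's formula $\Delta|\nabla u|^2 = 2|\nabla^2 u|^2 + 2\langle\nabla u, \nabla\Delta u\rangle + R|\nabla u|^2$ (using $\mathrm{Ric} = (R/2)g$ on the surface), substitute $\nabla\Delta u = \nabla H_\varepsilon + \varepsilon\nabla R$ and $\Delta u = H_\varepsilon + \varepsilon R$, and invoke the dimension-two inequality $|\nabla^2 u|^2 \geq (\Delta u)^2/2$. The $\nabla u$--$\nabla R$ cross terms are then grouped through the completing-the-square identity
\[
R|\nabla u|^2 + 2\varepsilon\langle\nabla u, \nabla R\rangle = R\Bigl|\nabla u + \tfrac{\varepsilon \nabla R}{R}\Bigr|^2 - \tfrac{\varepsilon^2|\nabla R|^2}{R},
\]
which uses $R > 0$ crucially. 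These manipulations produce a Li--Yau-type differential inequality whose dominant right-hand term is $-H_\varepsilon^2$, with additional contributions $-H_\varepsilon$, $-\varepsilon R$, $-\varepsilon R H_\varepsilon$, $-\varepsilon^2 R^2$ that are non-positive at points where $H_\varepsilon > 0$. The extra $-\Delta u = -H_\varepsilon - \varepsilon R$ coming from the nonlinear $-f\ln f$ term of \eqref{foreq1} is precisely what gives the indispensable $-H_\varepsilon$ term used to absorb the curvature-dependent residues.

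Finally, suppose $F = tH_\varepsilon - 1$ attains a positive interior maximum at $(x_0, t_0)$. At that point $\nabla H_\varepsilon = 0$, so the $2\langle\nabla u, \nabla H_\varepsilon\rangle$ drift term drops out, while $(\partial_t - \Delta)H_\varepsilon \geq -H_\varepsilon/t_0$. Inserting these into the differential inequality, combined with $H_\varepsilon(x_0, t_0) > 1/t_0 > 0$ and $R > 0$, forces a strict sign contradiction, giving $F \leq 0$ and hence the theorem. The main obstacle will be the third step: choosing the correct grouping of the curvature-dependent terms (particularly $-\varepsilon^2\Delta R$ together with the $\langle\nabla u,\nabla R\rangle$ cross term) so that the completing-the-square identity and the surface-specific Hessian estimate yield a clean inequality in which every residual term has a definite non-positive sign at the maximum; the positivity hypothesis $R > 0$ and the two-dimensionality of $M$ enter at both of these places.
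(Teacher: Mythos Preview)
For the corollary as stated (the fixed-metric case $\varepsilon=0$), your scheme is correct and coincides with the paper's: the paper proves Theorem~\ref{interposur} for general $\varepsilon\geq0$ by the same evolution-equation/maximum-principle computation and then simply sets $\varepsilon=0$. At $\varepsilon=0$ all of your $\varepsilon$-dependent residuals vanish, the differential inequality collapses to
\[
(\partial_t-\Delta)H_0\leq -H_0^{\,2}-2\langle\nabla u,\nabla H_0\rangle-R|\nabla u|^2-H_0,
\]
and your maximum-principle argument on $F=tH_0-1$ goes through exactly as you outline.

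However, you have written the argument for general $\varepsilon>0$, and there your accounting of the residual terms is incomplete. After Bochner, the substitution $\nabla\Delta u=\nabla H_\varepsilon+\varepsilon\nabla R$, and your completing-the-square identity, the term $-\varepsilon^2\Delta R$ does not disappear: it combines with the $+\varepsilon^2|\nabla R|^2/R$ produced by the square and with $-\varepsilon^2R^2$ to give
\[
-\varepsilon^2\Delta R+\frac{\varepsilon^2|\nabla R|^2}{R}-\varepsilon^2R^2
\;=\;-\varepsilon R\cdot\varepsilon\bigl(\Delta\ln R+R\bigr)
\;=\;-\varepsilon R\left(\frac{\partial\ln R}{\partial t}-\varepsilon|\nabla\ln R|^2\right),
\]
which has \emph{no} definite sign. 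The paper handles this via Chow's trace Harnack inequality for the $\varepsilon$-Ricci flow on a surface with $R>0$, namely $\partial_t\ln R-\varepsilon|\nabla\ln R|^2\geq -1/t$, which bounds the above by $\varepsilon R/t$; that extra $+\varepsilon R/t$ is then exactly what is absorbed when one passes from $H_\varepsilon$ to $H_\varepsilon-1/t$ (see \eqref{kkevoposur}--\eqref{kkepolra}). So your assertion that ``every residual term has a definite non-positive sign at the maximum'' fails once $\varepsilon>0$, and the missing ingredient is precisely the scalar-curvature Harnack for the underlying curvature flow---an external input, not a consequence of clever grouping.
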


If we take $\varepsilon=1$ in Theorem \ref{interposur}, we get:
\begin{corollary}\label{formainsur}
Let $(M,g(t))$, $t\in[0,T)$, be a solution to the Ricci flow on a
closed surface with $R>0$. If $f$ is a positive solution to the
nonlinear heat equation \eqref{form3}, then for all time
$t\in(0,T)$,
\[
\frac{\partial}{\partial t}\ln f-|\nabla\ln f|^2+\ln f+\frac
1t=\Delta\ln f+R+\frac 1t\geq 0.
\]
\end{corollary}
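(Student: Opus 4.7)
The plan is to apply the parabolic maximum principle to $P := H_\varepsilon - 1/t$ on $M \times (0,T)$ and show $P \leq 0$. Since $P \to -\infty$ as $t \to 0^+$ uniformly on the closed surface $M$, if $\sup P > 0$ there must be a first interior time $t_0 > 0$ and a point $x_0 \in M$ at which $P(x_0, t_0) = 0$; the usual first-order conditions then give $H_\varepsilon = 1/t_0$, $\nabla H_\varepsilon = 0$, $\Delta H_\varepsilon \leq 0$, and $(H_\varepsilon)_t \geq -1/t_0^2$ at $(x_0, t_0)$. My task is to contradict these via the evolution equation of $H_\varepsilon$.

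\textbf{Computing the evolution equation.} First I would derive $u_t = \Delta u - |\nabla u|^2 - u - \varepsilon R$ from \eqref{foreq1}. On a surface the $\varepsilon$-Ricci flow \eqref{psRF} is a conformal deformation of the metric, so for any time-dependent function $u$ one has $(\Delta u)_t = \varepsilon R\,\Delta u + \Delta u_t$, and the scalar curvature evolves by $R_t = \varepsilon(\Delta R + R^2)$. Combining these with $\Delta u = H_\varepsilon + \varepsilon R$ and simplifying yields
\[
(H_\varepsilon)_t = \Delta H_\varepsilon - \Delta|\nabla u|^2 + \varepsilon R\,H_\varepsilon - H_\varepsilon - \varepsilon R - \varepsilon^2 \Delta R.
\]
I would then substitute the two-dimensional Bochner identity $\Delta|\nabla u|^2 = 2|\nabla^2 u|^2 + 2\langle \nabla u, \nabla \Delta u\rangle + R|\nabla u|^2$ together with the pointwise bound $|\nabla^2 u|^2 \geq (\Delta u)^2/2$.

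\textbf{At the critical point.} Plugging the maximum conditions at $(x_0, t_0)$ into the resulting inequality, completing the square in the cross-term $R|\nabla u|^2 + 2\varepsilon \langle \nabla u, \nabla R\rangle$ to produce the lower bound $-\varepsilon^2 |\nabla R|^2/R$, and using the algebraic identity $\Delta R - |\nabla R|^2/R = R\,\Delta \ln R$, the calculation collapses to the pointwise inequality
\[
\varepsilon R\Bigl(1 + \tfrac{1}{t_0} + \varepsilon(R + \Delta \ln R)\Bigr) + \tfrac{1}{t_0} \leq 0
\]
at $(x_0, t_0)$.

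\textbf{The main obstacle.} The only quantity of unprescribed sign in the last display is $R + \Delta \ln R$. To control it I would invoke Hamilton's trace differential Harnack inequality for the scalar curvature on a closed surface evolving by the Ricci flow with $R > 0$; after the time rescaling $s = \varepsilon t$, which turns the $\varepsilon$-Ricci flow into the Ricci flow, this reads $\varepsilon(R + \Delta \ln R) + 1/t \geq 0$. Inserting this into the displayed inequality forces the parenthesized factor to be at least $1$, so the entire left-hand side is strictly positive, producing the desired contradiction. The equivalent form stated in the theorem then follows by substituting $\Delta u = H_\varepsilon + \varepsilon R$ back into $u_t = \Delta u - |\nabla u|^2 - u - \varepsilon R$ and rewriting in terms of $\ln f$.
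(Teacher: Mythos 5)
Your proposal is correct and follows essentially the same route as the paper: you reprove the interpolated Harnack inequality of Theorem \ref{interposur} for the quantity $H_\varepsilon-\frac 1t$ using the same conformal evolution of $\Delta$, the same surface Bochner identity, the same completion of squares involving $\nabla\ln R$, and Chow's trace Harnack inequality $\varepsilon(\Delta\ln R+R)\geq-\frac 1t$, then specialize to $\varepsilon=1$. The only cosmetic differences are that you run the maximum principle as a first-zero contradiction rather than applying it directly to the evolution inequality \eqref{kkepolra}, and you bound $-2|\nabla\nabla u|^2$ by $-(\Delta u)^2$ instead of via the trace-free term $|\nabla_i\nabla_j u-\frac{\varepsilon}{2}Rg_{ij}|^2$, which yields the identical estimate.
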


\begin{remark}
X. Cao and Z. Zhang~\cite{[CaoZhang]} have proved a differential Harnack
inequality for equation \eqref{form3} under the Ricci flow on any
dimensional manifold. However, on a closed surface, the result of
Corollary \ref{formainsur} is better than theirs.
\end{remark}

\begin{remark}
Interestingly, Theorem \ref{interposur} is a nonlinear interpolated
Harnack inequality which links Corollary \ref{interfix} to Corollary
\ref{formainsur}.
\end{remark}

\vspace{1em}

Secondly, we now consider differential Harnack inequalities for positive
solutions to the nonlinear backward heat equation with potential
$2R$, that is,
\begin{equation}\label{eq1}
\frac{\partial}{\partial t}f=-\Delta f+f\ln f+2Rf
\end{equation}
under the Ricci flow. X. Cao and Z. Zhang \cite{[CaoZhang]} made nice explanations that
the nonlinear forward heat equation \eqref{form3} is closely related to expanding gradient
Ricci solitons. Analogously to the argument of Cao and Zhang, our consideration of the
Equation \eqref{eq1} is motivated by \emph{shrinking} gradient Ricci solitons proposed in
\cite{[Ham2]}. Recall that a shrinking gradient Ricci soliton $(M,g)$ is defined
by the form (see \cite{[CLN]})
\begin{equation}\label{soliton}
R_{ij}+\nabla_i\nabla_jw=cg_{ij},
\end{equation}
where $w$ is some Ricci soliton potential and $c$ is a positive
constant. Taking the trace of both sides of \eqref{soliton} yields
\begin{equation}\label{soliTr}
R+\Delta w=\mathrm{const}.
\end{equation}
Using the contracted Bianchi identity, we can easily deduce that
\begin{equation}\label{soliBich}
R-2cw+|\nabla w|^2=-\mathrm{const}.
\end{equation}
From \eqref{soliTr} and \eqref{soliBich}, we get
\begin{equation}\label{comb}
2|\nabla w|^2=-\Delta w+|\nabla w|^2+2cw-2R.
\end{equation}

Recall that the Ricci flow solution for a complete gradient Ricci soliton
(\cite{[CLN]}, Theorem 4.1) is the pullback of $g$ under  $\varphi(t)$,
up to a scale factor $c(t)$:
\[
g(t)=c(t)\cdot\varphi(t)^{*}g,
\]
where $c(t):=-2ct+1>0$ and $\varphi(t)$ is the 1-parameter family of
diffeomorphisms generated by
\[
\frac{1}{c(t)}\nabla_g w.
\]
Then the corresponding Ricci soliton potential $\varphi(t)^{*}w$ satisfies
\[
\frac{\partial}{\partial t} \varphi(t)^{*}w=\left|\nabla\varphi(t)^{*}w\right|^2.
\]
Note that along the Ricci flow, \eqref{comb} becomes
\[
2|\nabla\varphi(t)^{*}w|^2=-\Delta\varphi(t)^{*}w+|\nabla\varphi(t)^{*}w|^2
+\frac{2c}{c(t)}\cdot\varphi(t)^{*}w-2R.
\]
Hence the evolution equation for the Ricci soliton potential $\varphi(t)^{*}w$ is
\[
2\frac{\partial\varphi(t)^{*}w}{\partial t}
=-\Delta\varphi(t)^{*}w+|\nabla\varphi(t)^{*}w|^2
+\frac{2c}{c(t)}\cdot\varphi(t)^{*}w-2R.
\]
If we let $\varphi(t)^{*}w=-\ln \tilde{f}$, this equation becomes
\begin{equation}\label{comb3}
2\frac{\partial\tilde{f}}{\partial t}=-\Delta\tilde{f}+2R\tilde{f}
+\frac{2c}{c(t)}\cdot \tilde{f}\ln \tilde{f}.
\end{equation}
Notice that \eqref{eq1} and \eqref{comb3} are closely related and only differ by
the time scaling and their last terms.

For the nonlinear backward heat equation \eqref{eq1} under the Ricci flow, we have:
\begin{theorem}\label{main1}
Let $(M, g(t))$, $t\in[0,T]$, be a solution to the Ricci flow on a
closed manifold. Let $f$ be a positive solution to the nonlinear
backward heat equation \eqref{eq1}, $u=-\ln f$, $\tau=T-t$ and
\begin{equation}\label{harna1}
H=2\Delta u-|\nabla u|^2+2R-2\frac{n}{\tau}.
\end{equation}
Then for all time $t\in[0,T)$,
\[
H\leq \frac {n}{2}.
\]
\end{theorem}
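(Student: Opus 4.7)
The plan is to apply the maximum principle to $H$ under the drift heat operator $\partial_\tau-\Delta+2\nabla u\cdot\nabla$. With $u=-\ln f$ and $\tau=T-t$, the equation \eqref{eq1} translates (under the backward Ricci flow $\partial_\tau g_{ij}=2R_{ij}$) to
\[
\partial_\tau u=\Delta u-|\nabla u|^2-u+2R.
\]
A crucial structural observation is that the $-2n/\tau$ term in $H$ forces $H\to-\infty$ uniformly on $M$ as $\tau\to 0^+$, so if $H$ ever exceeds $n/2$, this must first happen at some $\tau^*>0$, where a maximum-principle contradiction becomes possible.

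The main computation is to evaluate $(\partial_\tau-\Delta+2\nabla u\cdot\nabla)H$. Using the backward-Ricci-flow identities
\[
\partial_\tau\Delta u=\Delta(\partial_\tau u)-2R^{ij}u_{ij},\quad\partial_\tau|\nabla u|^2=2\langle\nabla u,\nabla\partial_\tau u\rangle-2R^{ij}u_iu_j,\quad\partial_\tau R=-\Delta R-2|\mathrm{Ric}|^2,
\]
together with Bochner's formula, the first-order terms assemble exactly as $-2\nabla u\cdot\nabla H$ and one obtains
\[
(\partial_\tau-\Delta+2\nabla u\cdot\nabla)H=-2|u_{ij}+R_{ij}|^2-2|\mathrm{Ric}|^2-2\Delta u+2|\nabla u|^2+\frac{2n}{\tau^2}.
\]
Completing the square via $-2|A|^2=-2|A-\tfrac{1}{2}g|^2-2\,\mathrm{tr}\,A+\tfrac{n}{2}$ applied to $A=u_{ij}+R_{ij}$ and $A=R_{ij}$, and then using the identity $2\Delta u+2R-|\nabla u|^2=H+2n/\tau$, the right-hand side collapses into the tractable form
\[
-2\bigl|u_{ij}+R_{ij}-\tfrac{1}{2}g_{ij}\bigr|^2-2\bigl|R_{ij}-\tfrac{1}{2}g_{ij}\bigr|^2-2H+n-\frac{4n}{\tau}+\frac{2n}{\tau^2}.
\]

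For the maximum-principle step, assume for contradiction $H>n/2$ somewhere and take the first $\tau^*>0$ and a point $x^*\in M$ realizing $H(x^*,\tau^*)=n/2=\max_x H(\cdot,\tau^*)$. At $(x^*,\tau^*)$ one has $\nabla H=0$, $\Delta H\le 0$, and $\partial_\tau H\ge 0$, so the evolution forces the right-hand side above to be $\ge 0$. Substituting $H=n/2$ rearranges this to
\[
2\bigl|u_{ij}+R_{ij}-\tfrac{1}{2}g_{ij}\bigr|^2+2\bigl|R_{ij}-\tfrac{1}{2}g_{ij}\bigr|^2\le\frac{2n(1-2\tau^*)}{(\tau^*)^2}.
\]

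The main obstacle is that this right-hand side is strictly positive when $\tau^*<1/2$, so nonnegativity of the squares alone is insufficient. The extra ingredient is the pointwise constraint $|\nabla u|^2\ge 0$, which together with $H(x^*,\tau^*)=n/2$ yields $\Delta u+R-n/2\ge n/\tau^*-n/4$. Applying the trace Cauchy--Schwarz $|u_{ij}+R_{ij}-\tfrac{1}{2}g_{ij}|^2\ge(\Delta u+R-n/2)^2/n$ then provides an additional negative contribution that more than compensates for the residue $2n(1-2\tau^*)/(\tau^*)^2$ whenever $\tau^*<4$; for $\tau^*\ge 1/2$ the residue is already nonpositive and the bare inequality suffices. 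In every case one arrives at a strict contradiction, completing the proof that $H\le n/2$ on $M\times[0,T)$.
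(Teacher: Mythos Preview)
Your computation is correct and the argument closes, but you have taken a noticeably harder route than the paper. The difference lies entirely in how you complete the square in the Hessian term. The paper absorbs $\tfrac{1}{\tau}g_{ij}$ into the square, writing the evolution as
\[
(\partial_\tau-\Delta+2\nabla u\cdot\nabla)H
=-\tfrac{2}{\tau}H-\tfrac{2}{\tau}|\nabla u|^2-2|\mathrm{Ric}|^2
-2\bigl|u_{ij}+R_{ij}-\tfrac{1}{\tau}g_{ij}\bigr|^2-2(\Delta u-|\nabla u|^2),
\]
and then applies the trace inequality to both curvature terms. After rewriting $-2(\Delta u-|\nabla u|^2)$ in terms of $H$ and completing the square on the resulting linear pieces, one obtains
\[
(\partial_\tau-\Delta+2\nabla u\cdot\nabla)\bigl(H-\tfrac{n}{2}\bigr)
\le-\Bigl(\tfrac{2}{\tau}+2\Bigr)\bigl(H-\tfrac{n}{2}\bigr)
-\tfrac{2}{\tau}|\nabla u|^2
-\tfrac{2}{n}\bigl(\Delta u+R-\tfrac{n}{\tau}-\tfrac{n}{2}\bigr)^2
-\tfrac{2}{n}\bigl(R-\tfrac{n}{2}\bigr)^2-\tfrac{3n}{\tau}.
\]
Every term on the right is nonpositive, with the constant $-3n/\tau$ strictly negative, so the maximum principle applies immediately with no case analysis.

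By contrast, you complete the square around $\tfrac{1}{2}g_{ij}$, which leaves the residue $+\tfrac{2n}{\tau^2}$ unabsorbed and forces you to split on $\tau^*\gtrless\tfrac12$ and to feed the constraint $|\nabla u|^2\ge 0$ back through the trace inequality. This works, but the paper's grouping shows that the extra step is avoidable: shifting by $g_{ij}/\tau$ rather than $g_{ij}/2$ converts the troublesome $2n/\tau^2$ into the benign damping term $-\tfrac{2}{\tau}H$, and the proof becomes a one-line maximum-principle application.
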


\begin{remark}\label{equi}
We can easily see that $H\leq \frac n2$ is equivalent to
\[
\frac{|\nabla f|^2}{f^2}-2\left(\frac{f_\tau}{f}+\ln f+
R\right)\leq 2\frac n\tau+\frac n2.
\]
In \cite{[Yang]} (see also \cite{[Wu2]}), the classical
Li-Yau gradient estimate for positive solutions to the nonlinear
heat equation \eqref{form} is
\[
\frac{|\nabla f|^2}{f^2}-2\left(\frac{f_t}{f}+a\ln f+b\right)\leq
2\frac nt+na
\]
on manifolds with a fixed metric satisfying nonnegative Ricci
curvature. Hence our Harnack inequality is similar to the classical
Li-Yau gradient estimate for the nonlinear heat equation
\eqref{form}.
\end{remark}

If we assume instead that our solution to the Ricci flow is defined
for $t\in[0,T)$ (where $T<\infty$ is the blow-up time) and is of
type I, meaning that
\begin{equation}\label{type1}
|Rm|\leq\frac{d_0}{T-t}
\end{equation}
for some constant $d_0$, then we can show this:
\begin{theorem}\label{main2}
Let $(M, g(t))$, $t\in[0,T)$ (where $T<\infty$ is the blow-up time) be a
solution to the Ricci flow on a closed manifold of dimension $n$, and assume
that $g$ is of type I, that is, it satisfies \eqref{type1}, for some constant
$d_0$. Let $f$ be a positive solution to the nonlinear backward heat equation
\eqref{eq1}, $u=-\ln f$, $\tau=T-t$ and
\[
H=2\Delta u-|\nabla u|^2+2R-d\frac{n}{\tau},
\]
where $d=d(d_0,n)\geq 2$ is some constant such that $H(\tau)<0$ for
small $\tau$. Then, for all time $t\in[0,T)$,
\[
H\leq \frac n2.
\]
\end{theorem}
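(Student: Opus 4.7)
The plan is to repeat the parabolic maximum-principle scheme used for Theorem \ref{main1}, with the type I hypothesis invoked solely to secure the initial barrier at small $\tau$ that Theorem \ref{main1} gets for free. Writing $u = -\ln f$ and working in backward time $\tau = T - t$, equation \eqref{eq1} yields the forward equation
\[
u_\tau = \Delta u - |\nabla u|^2 - u + 2R,
\]
and combining this with the Bochner identity and the Ricci flow evolution formulas for $R$, $\Delta u$, and $|\nabla u|^2$ produces an evolution inequality for $H$ under a heat-type operator of the form $\Box := \partial_\tau - \Delta + 2\nabla u\cdot\nabla$, of essentially the same shape as the one that drives Theorem \ref{main1}. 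The only structural change is that the $-2n/\tau$ summand of $H$ has been replaced by $-dn/\tau$; time-differentiating contributes an extra $(d-2)n/\tau^2$ to $\Box H$, which has the sign that \emph{sharpens} the resulting barrier at $H = n/2$ since $d \ge 2$.

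Next, I would secure the small-$\tau$ initial condition using the type I bound. From $|Rm| \le d_0/\tau$ one gets $|R| \le c(n) d_0/\tau$ for a dimensional constant $c(n)$, so
\[
2R - \frac{dn}{\tau} \;\le\; \frac{2c(n)d_0 - dn}{\tau},
\]
which is strictly negative once $d > 2c(n)d_0/n$. The remaining terms $2\Delta u - |\nabla u|^2$ are bounded on each spatial slice of the closed manifold, so enlarging $d = d(d_0, n)$ further guarantees $H(\tau) < 0$ for all $\tau$ below an explicit threshold; this is exactly the role of the hypothesis on $d$ in the statement of the theorem.

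Finally, with the evolution inequality in hand and $H < 0 < n/2$ for all small $\tau$, I would argue by first attainment: if the set $\{H \ge n/2\}$ in $M\times(0,T]$ were nonempty, continuity and the smallness of $H$ near $\tau = 0$ would furnish a first spacetime point $(x_*,\tau_*)$ with $H(x_*,\tau_*) = n/2$. There one has $\nabla H = 0$, $\Delta H \le 0$, and $\partial_\tau H \ge 0$; substituting into the $\Box H$-inequality from Step 1 at $H = n/2$ yields $\partial_\tau H < 0$, a contradiction. Hence $H \le n/2$ throughout $M\times[0,T)$.

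The principal obstacle is Step 1: carrying out the evolution computation, applying the trace Cauchy--Schwarz inequality $|\nabla^{2}u|^{2} \ge (\Delta u)^{2}/n$, and rearranging so that $H = n/2$ emerges as the marginal value at which $\Box H$ loses its favorable sign. Once that inequality is recorded, the type I hypothesis enters only through the choice of $d$, and the first-attainment argument is then routine.
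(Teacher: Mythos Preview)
Your proposal is correct and matches the paper's approach exactly: the paper itself says only that ``an easy modification of the preceding proof, using \eqref{type1} to ensure that we can apply the maximum principle as $\tau\to 0$, verifies Theorem \ref{main2}'' and omits the details. Your outline --- rerun the evolution computation from Theorem \ref{main1}, observe that replacing $2n/\tau$ by $dn/\tau$ with $d\ge 2$ only improves the sign of the resulting parabolic inequality, invoke the type I bound (together with the stated hypothesis on $d$) to force $H<0$ for small $\tau$, and then apply the maximum principle --- is precisely that modification.
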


\vspace{1em}

Thirdly, we consider the nonlinear backward heat equation
\begin{equation}\label{conjeq1}
\frac{\partial}{\partial t}f=-\Delta f+f\ln f+Rf
\end{equation}
under the Ricci flow. This equation is very similar to equation
\eqref{eq1} and only differs by the last potential. We also find
that \eqref{conjeq1} can be regarded as
the extension of the linear backward heat equation considered
by X. Cao (see Theorem 1.3 in ~\cite{[Caox]}), and S.-L. Kuang and
Qi S. Zhang (see Theorem 2.1 in~\cite{[KuZh]}). In fact, we only have
the additional term : $f\ln f$ in the linear backward heat equation.
For this system, we prove:
\begin{theorem}\label{main3}
Let $(M, g(t))$, $t\in[0,T]$, be a solution to the Ricci flow on a
closed manifold with nonnegative scalar curvature. Let $f$ be a
positive solution to the nonlinear backward heat equation
\eqref{conjeq1}, $u=-\ln f$, $\tau=T-t$ and
\[
H=2\Delta u-|\nabla u|^2+R-2\frac n\tau.
\]
Then, for all time $t\in[0,T)$,
\[
H\leq \frac n4.
\]
\end{theorem}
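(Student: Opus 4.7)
The plan is to apply the parabolic maximum principle, in parallel with the proofs of Theorems~\ref{interposur} and~\ref{main1}.

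First, I would convert \eqref{conjeq1} into an equation for $u=-\ln f$. Using $\nabla f=-f\nabla u$ and $\Delta f=-f\Delta u+f|\nabla u|^2$, one finds (with $\tau=T-t$)
\[
u_\tau=\Delta u-|\nabla u|^2-u+R,\qquad\text{so}\qquad u_\tau-\Delta u=-|\nabla u|^2-u+R.
\]
Since $\Delta u$, $|\nabla u|^2$ and $R$ remain bounded on short intervals near $t=T$, the term $-2n/\tau$ dominates and $H\to-\infty$ as $\tau\to 0^{+}$.

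Second, I would compute $(\partial_\tau-\Delta)H$. The ingredients are the Bochner identity $\Delta|\nabla u|^2=2|\nabla^2 u|^2+2R^{ij}u_iu_j+2\langle\nabla u,\nabla\Delta u\rangle$; the commutator $\partial_\tau(\Delta u)=\Delta u_\tau-2R^{ij}u_{ij}$ along the backward Ricci flow; Hamilton's evolution $\partial_\tau R=-\Delta R-2|\mathrm{Ric}|^2$; and the PDE for $u$ above. Substituting $u_\tau-\Delta u$ wherever it occurs, the $\Delta R$-contributions cancel between $(\partial_\tau-\Delta)(\Delta u)$ and $(\partial_\tau-\Delta)R$, the $R^{ij}u_iu_j$ cross-terms cancel, the remaining Hessian and Ricci pieces assemble into the perfect square $-2|\nabla^2 u+\mathrm{Ric}|^2$, and the gradient terms regroup into the single coupling $-2\langle\nabla u,\nabla H\rangle$. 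I expect to arrive at
\[
(\partial_\tau-\Delta)H=-2|\nabla^2 u+\mathrm{Ric}|^2-2\langle\nabla u,\nabla H\rangle-H+|\nabla u|^2+R-\frac{2n}{\tau}+\frac{2n}{\tau^2}.
\]

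Third, I would argue by contradiction. If $H$ exceeded $n/4$ somewhere, there would, by Step~1, be a first parabolic time $\tau_0\in(0,T]$ and a point $x_0\in M$ with $H(x_0,\tau_0)=n/4$; at this point $\nabla H=0$, $\Delta H\leq 0$, and $\partial_\tau H\geq 0$, hence $(\partial_\tau-\Delta)H\geq 0$. Applying the Cauchy--Schwarz estimate $|\nabla^2 u+\mathrm{Ric}|^2\geq(\Delta u+R)^2/n$, substituting $\Delta u+R=\tfrac12(H+|\nabla u|^2+R+2n/\tau)$ from the definition of $H$, and using $R\geq 0$ so that $p:=|\nabla u|^2+R\geq 0$, the inequality reduces to a quadratic in $p$ with leading coefficient $1$ and discriminant $4q(q-4n)$, where $q:=2n/\tau>0$. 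For $q\leq 4n$ the discriminant is nonpositive; for $q>4n$ both roots turn out to be strictly negative. In either case no admissible $p\geq 0$ exists, contradicting $(\partial_\tau-\Delta)H\geq 0$, and proving $H\leq n/4$.

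The main obstacle is the bookkeeping in Step~2: many curvature and gradient cross-terms must combine into the clean identity above, in particular producing the perfect square $-2|\nabla^2 u+\mathrm{Ric}|^2$ (which powers the Cauchy--Schwarz step) and the gradient coupling $-2\langle\nabla u,\nabla H\rangle$ (which vanishes at the critical point). The precise numerical threshold $n/4$---rather than the $n/2$ of Theorem~\ref{main1}---emerges only from the algebra in Step~3, where the hypothesis $R\geq 0$ is used decisively to close the contradiction uniformly in $\tau$.
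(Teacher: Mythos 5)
Your proposal is correct and follows essentially the same route as the paper: your evolution identity for $(\partial_\tau-\Delta)H$ is exactly the paper's equation \eqref{conjevol1} with the square $\bigl|\nabla_i\nabla_ju+R_{ij}-\tfrac{1}{\tau}g_{ij}\bigr|^2$ expanded, and the Cauchy--Schwarz step, the use of $R\geq 0$, and the maximum-principle conclusion all coincide. The only difference is presentational: the paper completes the square in $\Delta u+R-\tfrac{n}{\tau}$ to make the right-hand side of the evolution inequality for $H-\tfrac n4$ manifestly nonpositive, whereas you evaluate at the first contact point and verify positivity of a quadratic in $p=|\nabla u|^2+R$ via its discriminant $4q(q-4n)$ --- which checks out.
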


By modifying the Harnack quantity of Theorem
\ref{main3}, we can deduce the following differential Harnack
inequalities \emph{without} assuming the nonnegativity of $R$:
\begin{theorem}\label{main3b}
Let $(M, g(t))$, $t\in[0,T]$, be a solution to the Ricci flow on a closed
manifold of dimension $n$. Let $f$ be a positive solution to the nonlinear
backward heat equation \eqref{conjeq1}, $v=-\ln f-\frac n2\ln(4\pi
\tau)$ , $\tau=T-t$, and
\[
P=2\Delta v-|\nabla v|^2+R-3\frac n\tau.
\]
Then, for all time $t\in[\frac T2,T)$,
\[
P\leq \frac n4.
\]
\end{theorem}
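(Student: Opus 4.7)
The plan is to mirror the proof of Theorem~\ref{main3}: derive an evolution inequality for the Harnack quantity $P$ along the coupled system consisting of the Ricci flow and the nonlinear backward heat equation \eqref{conjeq1}, and then apply the parabolic maximum principle. Since $v$ and $u=-\ln f$ differ only by the $x$-independent term $\tfrac{n}{2}\ln(4\pi\tau)$, one has $\nabla v=\nabla u$ and $\Delta v=\Delta u$, whence
\[
P=H-\frac{n}{\tau},
\]
where $H$ is the Harnack quantity of Theorem~\ref{main3}. The role of the extra shift $-n/\tau$, together with the time-dependent normalization in $v$, is precisely to absorb the scalar-curvature terms that previously forced the assumption $R\ge 0$.

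The first step is to translate the equation $f_t=-\Delta f+f\ln f+Rf$ into an equation for $u$ and then for $v$. With $\tau=T-t$, a short computation yields
\[
v_\tau=\Delta v-|\nabla v|^2-v-\tfrac{n}{2}\ln(4\pi\tau)+R-\tfrac{n}{2\tau}.
\]

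The second step is the main computation: evaluate $(\partial_\tau-\Delta)P$ along the Ricci flow using the Bochner formula
\[
\Delta|\nabla v|^2=2|\nabla^2 v|^2+2\langle\nabla v,\nabla\Delta v\rangle+2R_{ij}v_iv_j,
\]
the commutator identity $\partial_\tau(\Delta v)-\Delta(\partial_\tau v)=-2R_{ij}v_{ij}$, and the evolution $\partial_t R=\Delta R+2|\mathrm{Rc}|^2$. Substituting the formula for $v_\tau$ and applying the Cauchy--Schwarz inequalities $|\nabla^2 v|^2\ge(\Delta v)^2/n$ and $|\mathrm{Rc}|^2\ge R^2/n$, the goal is to reach an inequality of the form
\[
(\partial_\tau-\Delta)P\le -\tfrac{2}{n}P\bigl(P-\tfrac{n}{4}\bigr)-2\langle\nabla v,\nabla P\rangle+E,
\]
where the remainder $E$ collects the leftover $R$-dependent terms together with the extra $n/\tau^2$ generated by $\partial_\tau(n/\tau)$. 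The crucial fact is that replacing $H$ by $P=H-n/\tau$ (in tandem with the logarithmic correction in $v$) introduces exactly the $n/\tau^2$ needed to cancel the surviving $R$-terms, so that $E\le 0$ without any sign hypothesis on $R$.

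Finally, apply the maximum principle on $M\times[T/2,t_0]$ for any $t_0<T$. The restriction $t\ge T/2$ serves two purposes: it bounds the remaining $\tau$-dependent coefficients in the parabolic inequality, and through the $-3n/\tau$ summand in $P$ it also ensures $P\le n/4$ on the initial slice $t=T/2$. If $P$ first exceeded $n/4$ at some interior $(x_0,t_0)$, then $\nabla P=0$, $\Delta P\le 0$, and $\partial_\tau P\ge 0$ there, so the drift term drops and the inequality above gives $(\partial_\tau-\Delta)P<0$ because $P>n/4$, a contradiction. The main obstacle is the bookkeeping in the second step: the coefficient $3$ in $-3n/\tau$ must be chosen so that the $n/\tau^2$ produced by the shift exactly balances the surviving $R/\tau$ terms in $E$, while the $\tfrac{n}{2}\ln(4\pi\tau)$ correction in $v$ simultaneously provides the first-order drift needed to complete the square. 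Getting these coefficients to line up with no assumption on $\mathrm{sgn}\,R$ is the delicate calculation that the proof must carry out.
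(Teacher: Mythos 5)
Your overall architecture (write $P=\tilde P-\tfrac n\tau$ with $\tilde P=2\Delta v-|\nabla v|^2+R-2\tfrac n\tau$, track the extra $-\tfrac{n}{\tau^2}$ it produces in the evolution inequality, and close with the maximum principle) matches the paper. But the step you label as ``crucial'' is where the gap is. After the substitutions, the leftover curvature terms are
\[
-\frac{2R}{\tau}-\frac{n}{\tau^2}\;=\;-\frac{2}{\tau}\left(R+\frac{n}{2\tau}\right),
\]
and this is \emph{not} an exact cancellation that holds with no hypothesis on $R$: it is nonpositive precisely when $R\geq-\tfrac{n}{2\tau}$. The paper supplies this via Hamilton's a priori bound along the Ricci flow: $\partial_t R=\Delta R+2|\mathrm{Rc}|^2\geq\Delta R+\tfrac{2}{n}R^2$ plus the maximum principle gives $R\geq-\tfrac{n}{2t}$, and then the hypothesis $t\geq T/2$ is used exactly to convert this into $R\geq-\tfrac{n}{2\tau}$ (since $t\geq T/2$ forces $\tau=T-t\leq t$, hence $1/t\leq 1/\tau$). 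You list the evolution equation for $R$ among your tools but never draw this conclusion, and you assign the restriction $t\geq T/2$ two other roles (``bounding $\tau$-dependent coefficients'' and ``ensuring $P\leq n/4$ on the initial slice $t=T/2$''), neither of which is its actual function. Without the lower bound $R\geq -\tfrac{n}{2\tau}$, your remainder $E$ has no sign and the argument does not close.

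A second, related slip: the initial slice for the parabolic maximum principle is at small $\tau$, i.e.\ near $t=T$, not at $t=T/2$. There $P\leq n/4$ holds automatically because the $-3\tfrac n\tau$ term tends to $-\infty$; the conclusion is then propagated forward in $\tau$ (backward in $t$) down to $t=T/2$. Fixing the direction of time and inserting the scalar-curvature lower bound would bring your argument in line with the paper's proof.
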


\begin{remark}
Theorems \ref{main1}-\ref{main3b} extend to the nonlinear case Theorems 1.1¨C1.3
and 3.6 of \cite{[Caox]} and Theorem 2.1 of \cite{[KuZh]}.
\end{remark}

The proof of all our theorems nearly follows from the arguments of X.
Cao~\cite{[Caox]}, X. Cao and R. Hamilton~\cite{[CaoxHa]}, X. Cao
and Z. Zhang~\cite{[CaoZhang]}, and S.-L. Kuang and Qi S.
Zhang~\cite{[KuZh]}, where computations of evolution equations and
the maximum principle for parabolic equations are employed. The major
differences are that one of our results gives an interpolation
Harnack inequality for a nonlinear forward heat equation
along the $\varepsilon$-Ricci flow on a closed surface,
and the others provide differential Harnack estimates for various
\textbf{nonlinear backward} heat equations under the Ricci flow.

One interesting feature of this paper is that our differential
Harnack inequalities are not only like the Perelman's Harnack
inequalities, but also similar to the classical Li-Yau Harnack
inequalities for the corresponding nonlinear heat equation (see Remark \ref{equi}
above). Another feature of this paper is that our Harnack quantities
of nonlinear backward heat equations are nearly the same as those of linear
backward heat equations considered by X. Cao~\cite{[Caox]}, and S.-L. Kuang
and Qi S. Zhang~\cite{[KuZh]}. Due to the fact that Ricci soliton potentials are linked
with some nonlinear backward heat equations, we expect that our differential
Harnack inequalities will be useful in understanding the Ricci solitons.

The rest of this paper is organized as follows: In Sect.~\ref{sec4},
we will prove a new differential interpolated Harnack inequality
on a surface, i.e., Theorem \ref{interposur}. In Sect.~\ref{sec2},
we firstly derive differential Harnack inequalities for positive
solutions to the nonlinear backward heat equation with potential $2R$
under the Ricci flow (Theorems \ref{main1} and \ref{main2}). Then
a classical integral version of the Harnack inequality will be proved
(Theorem \ref{classmain1}). In the latter part of this section, we
will establish Harnack inequalities for another nonlinear backward
heat equation with potential $R$ under the Ricci flow
(Theorem \ref{main3}) as well as its classical Harnack version
(Theorem \ref{classmain3}). By modifying the Harnack quantity of
Theorem \ref{main3}, we can prove another differential Harnack
inequalities without the nonnegative assumption of scalar curvature
(Theorem \ref{main3b}). Finally, in Sect.~\ref{sec5}, we
will prove gradient estimates for positive and bounded solutions to the
nonlinear (including backward) heat equation without potentials
under the Ricci flow, i.e., Theorems \ref{heatmain1} and \ref{heatmain2}.

% --------------------------------------------------------------------------
% ------------------------------------------------------------------------

\section{Nonlinear heat equation with potentials}\label{sec4}
In this section, we will prove a differential interpolated Harnack inequality
for positive solutions to nonlinear forward heat equations with potentials
coupled with the $\varepsilon$-Ricci flow on a closed surface.

Let $f$ be a positive solution to the nonlinear forward heat equation \eqref{foreq1}. By
the maximum principle, we conclude that the solution will remain positive along
the Ricci flow when scalar curvature is positive. If we let
\[
u=-\ln f,
\]
then $u$ satisfies the equation
\[
\frac{\partial}{\partial t}u=\Delta u-|\nabla u|^2-\varepsilon R-u.
\]

\begin{proof}[Proof of Theorem \ref{interposur}]
The proof involves a direct computation and the parabolic
maximum principle. Let $f$ and $u$ be defined as above.
Under the $\varepsilon$-Ricci flow \eqref{psRF} on a closed
surface, we have that
\[
\frac{\partial R}{\partial t}=\varepsilon(\Delta R+R^2)
\]
and
\[
\frac{\partial}{\partial t}(\Delta)=\varepsilon R\Delta,
\]
where the Laplacian $\Delta$ is acting on functions. Define the Harnack quantity
\begin{equation}\label{Harquant}
H_\varepsilon=\Delta u-\varepsilon R.
\end{equation}
Using the evolution equations above, we first compute that
\begin{equation*}
\begin{aligned}
\frac{\partial}{\partial t}H_\varepsilon&=\Delta\left(\frac{\partial}{\partial
t}u\right)+\left(\frac{\partial}{\partial t}\Delta\right)u
-\varepsilon\frac{\partial R}{\partial t}\\
&=\Delta\left(\Delta u-|\nabla u|^2-\varepsilon
R-u\right)+\varepsilon R\Delta u-\varepsilon\frac{\partial
R}{\partial t}\\
&=\Delta H_\varepsilon-\Delta|\nabla u|^2-\Delta u
+\varepsilon RH_\varepsilon+\varepsilon^2
R^2-\varepsilon\frac{\partial R}{\partial t}
\end{aligned}
\end{equation*}
Since
\[
\Delta|\nabla u|^2=2|\nabla\nabla u|^2+2\nabla\Delta u\cdot\nabla u+R|\nabla u|^2
\]
on a two-dimensional surface, we then have
\begin{equation*}
\begin{aligned}
\frac{\partial}{\partial t}H_\varepsilon
&=\Delta H_\varepsilon-2|\nabla\nabla u|^2
-2\nabla\Delta u\cdot\nabla u-R|\nabla u|^2\\
&\quad+\varepsilon RH_\varepsilon+\varepsilon^2
R^2-\varepsilon\frac{\partial R}{\partial t}-\Delta u\\
&=\Delta H_\varepsilon-2|\nabla\nabla u|^2
-2\nabla H_\varepsilon\cdot\nabla u-2\varepsilon\nabla R\cdot\nabla u\\
&\quad-R|\nabla u|^2+\varepsilon RH_\varepsilon+\varepsilon^2
R^2-\varepsilon\frac{\partial R}{\partial t}-\Delta u\\
&=\Delta H_\varepsilon-2\left|\nabla_i\nabla_ju-\frac\varepsilon2Rg_{ij}\right|^2
-2\varepsilon R\Delta u-2\nabla H_\varepsilon\cdot\nabla u\\
&\quad-2\varepsilon\nabla R\cdot\nabla u-R|\nabla u|^2
+\varepsilon RH_\varepsilon+2\varepsilon^2R^2
-\varepsilon\frac{\partial R}{\partial t}-\Delta u.
\end{aligned}
\end{equation*}
Since $\Delta u=H_\varepsilon+\varepsilon R$ by \eqref{Harquant}
these equalities become
\begin{equation*}
\begin{aligned}
\frac{\partial}{\partial t}H_\varepsilon
&=\Delta H_\varepsilon-2\left|\nabla_i\nabla_ju-\frac\varepsilon2Rg_{ij}\right|^2
-\varepsilon RH_\varepsilon-2\nabla H_\varepsilon\cdot\nabla u\\
&\quad-2\varepsilon\nabla R\cdot\nabla u-R|\nabla u|^2
-\varepsilon\frac{\partial R}{\partial t}-\Delta u.
\end{aligned}
\end{equation*}
Rearranging terms yields
\begin{equation}
\begin{aligned}\label{kkevoposur}
\frac{\partial}{\partial t}H_\varepsilon&=\Delta H_\varepsilon-
2\left|\nabla_i\nabla_ju-\frac\varepsilon2Rg_{ij}\right|^2
-2\nabla H_\varepsilon\cdot\nabla u-\varepsilon RH_\varepsilon\\
&\quad-R\left|\nabla u+\varepsilon\nabla\ln R\right|^2 -\varepsilon
R\left(\frac{\partial\ln R}{\partial t}
-\varepsilon|\nabla\ln R|^2\right)-\Delta u\\
&\leq\Delta H_\varepsilon-H_\varepsilon^2-2\nabla
H_\varepsilon\cdot\nabla u-(\varepsilon R+1)H_\varepsilon+\frac
\varepsilon tR-\varepsilon R.
\end{aligned}
\end{equation}
The reason for this last inequality is that the trace Harnack inequality for the
$\varepsilon$-Ricci flow on a closed surface proved in \cite{[Chow3]}
(see also Lemma 2.1 in \cite{[WuZheng]}) states that
\[
\frac{\partial\ln R}{\partial t}-\varepsilon|\nabla\ln
R|^2=\varepsilon(\Delta\ln R+R)\geq -\frac 1t,
\]
since $g(t)$ has positive scalar curvature. Besides this, we also
used \eqref{Harquant} and the elementary inequality
\[
\left|\nabla_i\nabla_ju-\frac \varepsilon 2Rg_{ij}\right|^2\geq
\frac 12(\Delta u-\varepsilon R)^2=\frac 12H_\varepsilon^2.
\]
Adding $-\frac 1t$ to $H_\varepsilon$ in \eqref{kkevoposur} yields
\begin{equation}
\begin{aligned}\label{kkepolra}
\frac{\partial}{\partial t}\left(H_\varepsilon-\frac
1t\right)&\leq\Delta \left(H_\varepsilon-\frac
1t\right)-2\nabla\left(H_\varepsilon-\frac 1t\right)\cdot\nabla u\\
&\quad-\left(H_\varepsilon+\frac 1t\right)\left(H_\varepsilon-\frac 1t\right)
-(\varepsilon R+1)\left(H_\varepsilon-\frac
1t\right)-\frac 1t-\varepsilon R.
\end{aligned}
\end{equation}
Clearly, for $t$ small enough we have $H_\varepsilon-1/t<0$.
Since $R>0$, applying the maximum principle to the evolution
formula \eqref{kkepolra} we conclude $H_\varepsilon-1/t\leq 0$
for all time $t$, and the proof of this theorem is completed.
\end{proof}

We remark that Theorem \ref{interposur} can be regarded as a
nonlinear version of an interpolated Harnack inequality proved
by B. Chow:
\begin{theorem}[B. Chow~\cite{[Chow3]}]\label{cor1s}
Let $(M,g(t))$ be a solution to the $\varepsilon$-Ricci flow
\eqref{psRF} on a closed surface with $R>0$. If $f$ is a positive
solution to
\[
\frac{\partial}{\partial t}f=\Delta f+\varepsilon Rf,
\]
then
\[
\frac{\partial }{\partial t}\ln f-|\nabla\ln f|^2+\frac 1t=\Delta
\ln f+\varepsilon R+\frac 1t\geq0.
\]
\end{theorem}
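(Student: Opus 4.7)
The plan is to mimic the proof of Theorem \ref{interposur}, simplified by the absence of the $-f\ln f$ nonlinearity, so that I recover an evolution for $u=-\ln f$ and a Harnack quantity identical in form to what Wu uses, but missing the extra zeroth-order term. First I would set $u=-\ln f$; since $f$ now solves only the \emph{linear} heat equation with potential, a direct computation gives
\[
\frac{\partial u}{\partial t}=\Delta u-|\nabla u|^2-\varepsilon R,
\]
i.e., the same evolution as in Section \ref{sec4} but without the spurious $-u$ term. The conclusion of the theorem is equivalent to $H_\varepsilon\le 1/t$ for $H_\varepsilon=\Delta u-\varepsilon R$, exactly the Harnack quantity \eqref{Harquant} used earlier.

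Next I would differentiate $H_\varepsilon$ in $t$, using $\partial_t R=\varepsilon(\Delta R+R^2)$ and $\partial_t\Delta=\varepsilon R\Delta$ on surfaces under \eqref{psRF}, together with the two-dimensional Bochner formula $\Delta|\nabla u|^2=2|\nabla\nabla u|^2+2\nabla\Delta u\cdot\nabla u+R|\nabla u|^2$. The chain of identities is exactly the one culminating in \eqref{kkevoposur} of the excerpt, except that the trailing $-\Delta u$ (produced in the nonlinear case from differentiating the $-u$ term) is absent. After the same two completions of squares—writing $|\nabla\nabla u|^2\ge\tfrac12(\Delta u-\varepsilon R)^2=\tfrac12H_\varepsilon^2$ and combining $-R|\nabla u|^2-2\varepsilon\nabla R\cdot\nabla u-\varepsilon R^2+\varepsilon^2R|\nabla\ln R|^2$ into $-R|\nabla u+\varepsilon\nabla\ln R|^2$ plus a remainder proportional to $\partial_t\ln R-\varepsilon|\nabla\ln R|^2$—I invoke Chow's trace Harnack for the $\varepsilon$-Ricci flow on a closed surface with $R>0$:
\[
\frac{\partial\ln R}{\partial t}-\varepsilon|\nabla\ln R|^2\ge -\frac1t.
\]
Discarding the nonpositive square $-R|\nabla u+\varepsilon\nabla\ln R|^2$ yields the differential inequality
\[
\frac{\partial H_\varepsilon}{\partial t}\le \Delta H_\varepsilon-H_\varepsilon^2-2\nabla H_\varepsilon\cdot\nabla u-\varepsilon RH_\varepsilon+\frac{\varepsilon R}{t}.
\]

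Finally, I would set $\widetilde H:=H_\varepsilon-\tfrac1t$ and compute that the $\varepsilon R/t$ remainder and the linearization of $-H_\varepsilon^2$ at $1/t$ combine to produce the clean inequality
\[
\frac{\partial\widetilde H}{\partial t}\le \Delta\widetilde H-2\nabla\widetilde H\cdot\nabla u-\widetilde H\Bigl(\widetilde H+\tfrac{2}{t}+\varepsilon R\Bigr).
\]
For $t\downarrow 0$, boundedness of $H_\varepsilon$ on the closed surface makes $\widetilde H<0$, so the parabolic maximum principle applied to $\widetilde H$ (at any first hypothetical interior maximum where $\widetilde H\ge 0$, the coefficient $\widetilde H+2/t+\varepsilon R$ is strictly positive, forcing $\partial_t\widetilde H<0$) yields $\widetilde H\le 0$ for all $t\in(0,T)$, which is the desired conclusion.

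The main obstacle is essentially bookkeeping: one must perform the two completions of squares in the correct order so that exactly the quantity controlled by Chow's trace Harnack appears, and then verify that the $\varepsilon R/t$ remainder is precisely what is needed to absorb the linearization of $-H_\varepsilon^2$ at $1/t$, giving an inequality on $\widetilde H$ with no sign-uncontrolled source term. Once that algebraic miracle is checked, the maximum principle step is routine.
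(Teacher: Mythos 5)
Your proof is correct, and it follows essentially the same route as the paper's only relevant argument: the paper itself does not prove Theorem \ref{cor1s} (it is quoted from Chow \cite{[Chow3]}), and your computation is exactly the linear specialization of the proof of Theorem \ref{interposur}, with the $-u$ term absent from the evolution of $u=-\ln f$ so that the troublesome $-\Delta u$ source term never appears and the cancellations you describe (the $\varepsilon R/t$ remainder against the cross term in $-H_\varepsilon^2$ after shifting by $1/t$) do go through as claimed. The only point worth tightening is the last step: at a first zero of $\widetilde H$ the reaction term $-\widetilde H(\widetilde H+2/t+\varepsilon R)$ vanishes rather than being strictly negative, so one should invoke the standard scalar maximum principle for $\partial_t\widetilde H\leq\Delta\widetilde H+b\cdot\nabla\widetilde H+c\,\widetilde H$ with $c$ locally bounded on $[\delta,T)$, rather than a strict-sign argument.
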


% --------------------------------------------------------------------------
% --------------------------------------------------------------------------

\section{Nonlinear backward heat equation with potentials}\label{sec2}

We next study several differential Harnack inequalities for positive solutions to the
nonlinear backward heat equation under the Ricci flow, proving Theorems \ref{main1}, \ref{main2},
\ref{main3}, and \ref{main3b} from the Introduction. The first two of these theorems deal with the
case where the potential equals $2R$, and the last two with the potential $R$. The
proofs are largely based on the maximum principle.

\vspace{0.5em}

\begin{itemize}
\item \emph{The potential equals to $2R$}
\end{itemize}

Theorems \ref{main1} and \ref{main2} deal with differential Harnack inequalities for
positive solutions to the equation
\[
\frac{\partial}{\partial t}f=-\Delta f+f\ln f+2Rf
\]
under the Ricci flow.  We follow the trick used to prove Theorem 1.1 in \cite{[CaoZhang]}
to simplify a tedious calculation of the evolution equations. Also
the evolution equation of $u$ in this case is very similar to what is considered in
\cite{[Caox]}. So we can borrow Cao's computation for the very general setting there to
simplify our calculation. The only difference is that we have extra terms coming
from the time derivative $\frac{\partial}{\partial \tau}u$.
\begin{proof}[Proof of Theorems \ref{main1}]
As before, it is easy to compute that $u$ satisfies the following
equation
\begin{equation}\label{eq2}
\frac{\partial}{\partial \tau}u=\Delta u-|\nabla u|^2+2R-u.
\end{equation}
Let
\begin{equation}\label{def2}
H=2\Delta u-|\nabla u|^2+2R-2\frac{n}{\tau}.
\end{equation}
Comparing with the equation (2.4) in \cite{[Caox]}, using \eqref{eq2}, we have
\begin{equation}
\begin{aligned}\label{evol1}
\frac{\partial}{\partial \tau}H&=\Delta H-2\nabla H\cdot \nabla
u-\frac{2}{\tau}H-\frac{2}{\tau}|\nabla u|^2-2|Rc|^2\\
&\quad-2\left|\nabla_i\nabla_ju+R_{ij}-\frac{1}{\tau}g_{ij}\right|^2
-2(\Delta u-|\nabla u|^2)\\
&\leq\Delta H-2\nabla H\cdot\nabla
u-\frac{2}{\tau}H-\frac{2}{\tau}|\nabla u|^2-\frac 2nR^2\\
&\quad-\frac 2n\left(\Delta u+R-\frac{n}{\tau}\right)^2-2(\Delta
u-|\nabla u|^2),
\end{aligned}
\end{equation}
where we used the elementary inequality
\[
\left|\nabla_i\nabla_ju-R_{ij}-\frac1\tau g_{ij}\right|^2\geq \frac 1n
\left(\Delta u-R-\frac n\tau\right)^2.
\]
By the definition of $H$ in \eqref{def2}, we also note that
\[
-2(\Delta u-|\nabla u|^2)=-2H+2\left(\Delta
u+R-\frac{n}{\tau}\right)+2R-\frac{2n}{\tau}.
\]
Plugging this into \eqref{evol1} yields
\begin{equation*}
\begin{aligned}
\frac{\partial}{\partial\tau}H&\leq\Delta H-2\nabla H\cdot\nabla
u-\left(\frac{2}{\tau}+2\right)H-\frac{2}{\tau}|\nabla u|^2-\frac
2nR^2\\
&\quad-\frac 2n\left(\Delta u+R-\frac{n}{\tau}-\frac n2\right)^2
+\frac n2+2R-\frac{2n}{\tau}\\
&=\Delta H-2\nabla H\cdot \nabla u
-\left(\frac{2}{\tau}+2\right)H-\frac{2}{\tau}|\nabla u|^2\\
&\quad-\frac 2n\left(\Delta u+R-\frac{n}{\tau}-\frac
n2\right)^2-\frac 2n\left(R-\frac n2\right)^2-\frac{2n}{\tau}+n.
\end{aligned}
\end{equation*}
Adding $-\frac n2$ to $H$, we have
\begin{equation}
\begin{aligned}\label{evol3}
\frac{\partial}{\partial \tau}\left(H-\frac n2\right)&\leq\Delta
\left(H-\frac n2\right)-2\nabla\left(H-\frac n2\right)\cdot\nabla
u-\left(\frac{2}{\tau}+2\right)\left(H-\frac n2\right)\\
&\quad-\frac{2}{\tau}|\nabla u|^2-\frac 2n\left(\Delta
u+R-\frac{n}{\tau}-\frac n2\right)^2-\frac 2n\left(R-\frac
n2\right)^2-\frac{3n}{\tau}.
\end{aligned}
\end{equation}
If $\tau$ is small enough, we can easily see that $H-\frac n2<0$. Then
applying the maximum principle to the evolution equation
\eqref{evol3} yields
\[
H-\frac n2\leq 0
\]
for all time $\tau$, hence for all $t\in[0,T)$.  This finishes the
proof of Theorem \ref{main1}.
\end{proof}
An easy modification of the preceding proof, using \eqref{type1} to
ensure that we can apply the maximum principle as $\tau\to0$,
verifies Theorem \ref{main2}. We omit the details.

\begin{remark}
Theorem \ref{main1} is also true on a complete noncompact Riemannian
manifolds, as long as we can apply the maximum principle.
\end{remark}

From Theorem \ref{main1}, we can derive a classical Harnack
inequality by integrating along a space-time path.
\begin{theorem}\label{classmain1}
Let $(M,g(t))$, $t\in[0,T]$, be a solution to the Ricci flow on a closed
manifold of dimension $n$. Let $f$ be a positive solution to the nonlinear
backward heat equation \eqref{eq1}. Assume that $(x_1,t_1)$ and
$(x_2,t_2)$, $0\leq t_1<t_2<T$, are two points in $M\times[0,T)$.
Then we have
\[
e^{t_2}\cdot\ln f(x_2,t_2)-e^{t_1}\cdot\ln f(x_1,t_1)\leq\frac
12\int^{t_2}_{t_1}e^{T-t}\left(|\dot{\gamma}|^2+2R+\frac
n2+\frac {2n}{T-t}\right)dt,
\]
where $\gamma$ is any space-time path joining $(x_1,t_1)$ and $(x_2,
t_2)$.
\end{theorem}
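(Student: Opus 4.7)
The plan is to integrate the pointwise differential Harnack inequality of Theorem~\ref{main1} along the space-time path $\gamma$, converting the spatial gradient term into a kinetic-energy term $|\dot\gamma|^2$ via a completion-of-squares trick, and then introducing an exponential integrating factor to absorb the linear $\ln f$ contribution coming from the nonlinear $f\ln f$ potential.

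First I would convert the bound $H\leq n/2$ of Theorem~\ref{main1} into a pointwise inequality involving only a single time derivative of $L:=\ln f=-u$. Using the evolution equation~\eqref{eq2} I can solve $\Delta u = u_\tau + |\nabla u|^2 - 2R + u$ and substitute into the defining formula~\eqref{harna1} for $H$; switching from $\tau$ back to $t$ (so $u_\tau=-L_t$, $|\nabla u|^2=|\nabla L|^2$, $u=-L$), Theorem~\ref{main1} becomes
\[
2L_t + |\nabla L|^2 - 2L \;\leq\; 2R + \frac{2n}{T-t} + \frac{n}{2}.
\]

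Next, along a path $\gamma(t)$ joining $(x_1,t_1)$ to $(x_2,t_2)$, the chain rule gives $\tfrac{d}{dt}L(\gamma(t),t) = L_t + \nabla L\cdot\dot\gamma$. Solving for $L_t$ and substituting into the inequality above, the elementary identity
\[
|\nabla L|^2 - 2\nabla L\cdot\dot\gamma \;=\; |\nabla L - \dot\gamma|^2 - |\dot\gamma|^2 \;\geq\; -|\dot\gamma|^2
\]
trades the spatial gradient for the kinetic energy $|\dot\gamma|^2$. After dividing by $2$, this produces the first-order differential inequality
\[
\frac{d}{dt}L(\gamma(t),t) - L \;\leq\; \tfrac12\Bigl(|\dot\gamma|^2 + 2R + \tfrac{2n}{T-t} + \tfrac{n}{2}\Bigr).
\]

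Finally, the linear $-L$ term on the left is killed by the integrating factor $e^{T-t}$, which rewrites the left-hand side as $\tfrac{d}{dt}\bigl(e^{T-t}L\bigr)$ (matching the exponential weight appearing in the claimed integrand). A direct integration from $t_1$ to $t_2$ then delivers the desired Harnack inequality. The only genuine subtlety is the presence of the $-L$ term: it is precisely this term, stemming from the nonlinear $f\ln f$ contribution, that forces the integrating factor and distinguishes the computation from its linear analogue in~\cite{[Caox]} and~\cite{[KuZh]}; once the pointwise inequality has been brought into the $(d/dt)L - L\leq(\cdots)$ form, the remaining step is routine.
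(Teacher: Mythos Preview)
Your proof is correct and essentially identical to the paper's: both substitute the evolution equation for $u$ into $H$ to eliminate $\Delta u$, apply the completion-of-squares trick $|\nabla L|^2-2\nabla L\cdot\dot\gamma\geq-|\dot\gamma|^2$ along the path, and then absorb the linear zeroth-order term with an exponential integrating factor (the paper works in the $\tau$-variable with factor $e^\tau$, you equivalently use $e^{T-t}$). Note that both routes actually produce $e^{T-t_i}$ rather than $e^{t_i}$ on the left-hand side, so the exponents in the stated inequality appear to be a misprint inherited from the paper's final substitution $e^{\tau_i}\mapsto e^{t_i}$.
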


\begin{proof}
This is similar to Theorem 2.3 in \cite{[Caox]}. we include the proof for
completeness. Consider the solutions to
\[
\frac{\partial}{\partial \tau}u=\Delta u-|\nabla u|^2+2R-u.
\]
Combining this with
\[
H-\frac n2=2\Delta u-|\nabla u|^2+2R-2\frac{n}{\tau}-\frac n2\leq
0,
\]
we have
\[
2\frac{\partial}{\partial \tau}u+|\nabla u|^2-2R-2\frac
n\tau+2u-\frac n2\leq 0.
\]
If $\gamma(x,t)$ is a space-time path joining $(x_2,\tau_2)$ and
$(x_1,\tau_1)$, with $\tau_1>\tau_2>0$. We have along $\gamma$
\begin{equation*}
\begin{aligned}
\frac{du}{d\tau}&=\frac{\partial u}{\partial \tau}+\nabla u\cdot\gamma\\
&\leq-\frac12|\nabla u|^2+R+\frac n\tau-u+\frac n4+\nabla
u\cdot\gamma\\
&\leq\frac12\left(|\dot{\gamma}|^2+2R+\frac n2\right)+\frac n\tau-u,
\end{aligned}
\end{equation*}
where in the last step above we used the inequality
\[
-\frac12|\nabla u|^2+\nabla u\cdot\gamma-
\frac12|\dot{\gamma}|^2\leq 0.
\]
Rearranging terms yields
\[
\frac{d}{d\tau}\left(e^{\tau}\cdot
u\right)\leq\frac{e^{\tau}}{2}\left(|\dot{\gamma}|^2+2R+\frac
n2+\frac{2n}{\tau}\right).
\]
Integrating this inequality we obtain
\[
e^{\tau_1}\cdot u(x_1,\tau_1)-e^{\tau_2}\cdot
u(x_2,\tau_2)\leq\frac
12\inf_{\gamma}\int^{\tau_1}_{\tau_2}e^{\tau}\left(|\dot{\gamma}|^2+2R+\frac
n2+\frac{2n}{\tau}\right)d\tau,
\]
which can be rewritten as
\[
e^{t_1}\cdot u(x_1,t_1)-e^{t_2}\cdot u(x_2,t_2)\leq\frac
12\inf_{\gamma}\int^{t_2}_{t_1}e^{T-t}\left(|\dot{\gamma}|^2+2R+\frac
n2+\frac{2n}{T-t}\right)dt.
\]
Note that $u=-\ln f$. Hence the desired classical Harnack inequality follows.
\end{proof}

% --------------------------------------------------------------------------

\vspace{0.5em}

\begin{itemize}
\item \emph{The potential equals to $R$}
\end{itemize}

We now turn to the equation with potential $R$:
\[
\frac{\partial}{\partial t}f=-\Delta f+f\ln f+Rf.
\]
Here we need to assume that the initial metric $g(0)$ has nonnegative scalar curvature.
It is well known that this property is preserved by the Ricci flow.

\begin{proof}[Proof of Theorem \ref{main3}]
This time $u$ satisfies
\[
\frac{\partial}{\partial \tau}u=\Delta u-|\nabla u|^2+R-u.
\]
Adapting  the equation (3.2) of \cite{[Caox]}, we can write
\begin{equation}
\begin{aligned}\label{conjevol1}
\frac{\partial}{\partial \tau}H&=\Delta H-2\nabla H\cdot \nabla
u-\frac{2}{\tau}H-\frac{2}{\tau}|\nabla u|^2-2\frac{R}{\tau}\\
&\quad-2\left|\nabla_i\nabla_ju+R_{ij}-\frac{1}{\tau}g_{ij}\right|^2
-2(\Delta u-|\nabla u|^2).
\end{aligned}
\end{equation}
Note that
\[
H=2\Delta u-|\nabla u|^2+R-2\frac n\tau,
\]
which implies
\[
-2(\Delta u-|\nabla u|^2)=-2H+2\left(\Delta
u+R-\frac{n}{\tau}\right)-\frac{2n}{\tau}.
\]
Plugging this into \eqref{conjevol1}, we obtain
\begin{equation*}
\begin{aligned}
\frac{\partial}{\partial\tau}H&\leq\Delta H-2\nabla H\cdot\nabla
u-\left(\frac{2}{\tau}+2\right)H-\frac{2}{\tau}|\nabla u|^2-2\frac{R}{\tau}\\
&\quad-\frac 2n\left(\Delta
u+R-\frac{n}{\tau}\right)^2+2\left(\Delta
u+R-\frac{n}{\tau}\right)-\frac{2n}{\tau}\\
&=\Delta H-2\nabla H\cdot \nabla
u-\left(\frac{2}{\tau}+2\right)H-\frac{2}{\tau}|\nabla u|^2-2\frac{R}{\tau}\\
&\quad-\frac 2n\left(\Delta u+R-\frac{n}{\tau}-\frac
n2\right)^2-\frac{2n}{\tau}+\frac n2.
\end{aligned}
\end{equation*}
Adding $-\frac n4$ to $H$ yields
\begin{equation}
\begin{aligned}\label{conjevol3}
\frac{\partial}{\partial \tau}\left(H-\frac n4\right)&\leq\Delta
\left(H-\frac n4\right)-2\nabla\left(H-\frac n4\right)\cdot\nabla
u-\left(\frac{2}{\tau}+2\right)\left(H-\frac n4\right)\\
&\quad-\frac{2}{\tau}|\nabla u|^2-2\frac{R}{\tau}-\frac
2n\left(\Delta u+R-\frac{n}{\tau}-\frac
n2\right)^2-\frac{5n}{2\tau}.
\end{aligned}
\end{equation}
Since $R\geq 0$, it is easy to see that $H-\frac n4<0$ for $\tau$ small enough.
Applying the maximum principle to the evolution formula \eqref{conjevol3}, we have
\[
H-\frac n4\leq 0
\]
for all time $\tau$, hence for all $t$.  This finishes the proof of
Theorem \ref{main3}.
\end{proof}

We easily derive counterparts to Theorem \ref{main2} and Theorem \ref{classmain1}:

\begin{theorem}\label{maintyp2}
Let $(M, g(t))$, $t\in[0,T)$ (where $T<\infty$ is the blow-up time) be a
solution to the Ricci flow on a closed manifold of dimension $n$ with nonnegative
scalar curvature, and assume that $g$ is of type I, that is, it satisfies
\eqref{type1}, for some constant $d_0$. Let $f$ be a positive solution to
the nonlinear backward heat equation \eqref{conjeq1}, $u=-\ln f$, $\tau=T-t$
and
\[
H=2\Delta u-|\nabla u|^2+R-d\frac{n}{\tau},
\]
where $d=d(d_0,n)\geq 1$ is some constant such that $H(\tau)<0$ for
small $\tau$. Then, for all time $t\in[0,T)$,
\[
H\leq \frac n4.
\]
\end{theorem}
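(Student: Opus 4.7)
The plan is to follow the proof of Theorem \ref{main3} almost verbatim, with two modifications: replacing the constant $2$ by $d$ in the $1/\tau$ term of the Harnack quantity, and using the Type I curvature bound \eqref{type1} to justify applying the maximum principle as $\tau \to 0$. This parallels exactly the way Theorem \ref{main2} modifies Theorem \ref{main1}.

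First I would verify that $u = -\ln f$ satisfies the same evolution $\partial_\tau u = \Delta u - |\nabla u|^2 + R - u$ as in the proof of Theorem \ref{main3}, and then compute $\partial_\tau H$ for $H = 2\Delta u - |\nabla u|^2 + R - dn/\tau$. The computation is identical to the one producing \eqref{conjevol1}, except that the time derivative of $-dn/\tau$ contributes $dn/\tau^2$ instead of $2n/\tau^2$, and the algebraic identity rewriting $-2(\Delta u - |\nabla u|^2)$ in terms of $H$ picks up an additional $(d-2)n/\tau$ term. Carrying these adjustments through the completion of the square and invoking the pointwise inequality
\[
\Bigl|\nabla_i\nabla_j u + R_{ij} - \tfrac{1}{\tau}g_{ij}\Bigr|^2 \geq \tfrac{1}{n}\Bigl(\Delta u + R - \tfrac{n}{\tau}\Bigr)^2,
\]
yields an evolution inequality for $H - n/4$ of the same structural form as \eqref{conjevol3}, with the assumption $R \geq 0$ used precisely to discard the bad term $-2R/\tau$ as nonpositive.

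Second, I would verify the initial condition $H(\tau) < 0$ for $\tau$ near $0$. This is where the Type I hypothesis enters: from $|Rm| \leq d_0/\tau$ one obtains $|R| \leq C(n) d_0/\tau$, and standard Shi-type derivative estimates along the Ricci flow give polynomial-in-$1/\tau$ bounds for $|\nabla u|$ and $|\Delta u|$ (through $f$ and its derivatives, using that $f$ solves \eqref{conjeq1} and the manifold is closed). Choosing $d = d(d_0, n) \geq 1$ sufficiently large, the term $-dn/\tau$ then dominates $2\Delta u - |\nabla u|^2 + R$ near $\tau = 0$, ensuring $H(\tau) < 0$ there. With this, the parabolic maximum principle applied to the evolution inequality produces $H - n/4 \leq 0$ for all $\tau \in (0, T]$, i.e., $H \leq n/4$ on $[0, T)$.

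The main obstacle I anticipate is precisely this second step: rigorously deriving bounds on $u$ and its derivatives sufficient to guarantee $H(\tau) < 0$ for small $\tau$. Translating the Type I bound into the required control on $|\nabla u|^2$ and $|\Delta u|$ for the nonlinear backward heat equation \eqref{conjeq1} demands careful use of parabolic regularity estimates and is the same technical point behind the author's omission of details in Theorem \ref{main2}. Everything else is a mechanical adaptation of the proof of Theorem \ref{main3}.
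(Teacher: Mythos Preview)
Your proposal is correct and matches the paper's own treatment: the paper does not give a separate proof of Theorem \ref{maintyp2} at all, but presents it as an ``easy counterpart'' to Theorem \ref{main2}, which in turn is described as an ``easy modification'' of the proof of Theorem \ref{main1} using the Type I bound to ensure the maximum principle applies as $\tau\to 0$. Your plan of rerunning the proof of Theorem \ref{main3} with $2$ replaced by $d$ and invoking \eqref{type1} for the initial condition is exactly this.

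One minor remark: your ``main obstacle'' is real, but note that the theorem statement already \emph{assumes} $d=d(d_0,n)$ is chosen so that $H(\tau)<0$ for small $\tau$; the paper (like its model Theorem \ref{main2}) builds this into the hypothesis rather than proving the existence of such $d$ from Shi-type estimates. So for the purpose of proving the stated theorem, you need only carry out the evolution-inequality computation and apply the maximum principle, taking the initial negativity of $H$ as given.
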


\begin{theorem}\label{classmain3}
Let $(M,g(t))$, $t\in[0,T]$, be a solution to the Ricci flow on a closed
manifold of dimension $n$ with nonnegative scalar curvature. Let $f$ be a
positive solution to the nonlinear backward heat equation
\eqref{conjeq1}. Assume that $(x_1,t_1)$ and $(x_2,t_2)$, $0\leq
t_1<t_2<T$, are two points in $M\times[0,T)$. Then
\[
e^{t_2}\cdot\ln f(x_2,t_2)-e^{t_1}\cdot\ln f(x_1,t_1)\leq\frac
12\int^{t_2}_{t_1}e^{T-t}\left(|\dot{\gamma}|^2+R+\frac
n4+\frac{2n}{T-t}\right)dt,
\]
where $\gamma$ is any space-time path joining $(x_1,t_1)$ and $(x_2,
t_2)$.
\end{theorem}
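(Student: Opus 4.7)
The plan is to follow the strategy used to establish Theorem \ref{classmain1} essentially verbatim, replacing the pointwise bound $H \le n/2$ with the stronger bound $H \le n/4$ supplied by Theorem \ref{main3}, and accounting for the fact that the potential is now $R$ rather than $2R$.

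First I would use that, as recorded in the proof of Theorem \ref{main3}, the function $u = -\ln f$ satisfies
\[
\frac{\partial u}{\partial \tau} = \Delta u - |\nabla u|^2 + R - u.
\]
Solving this for $\Delta u$ and feeding the result into the Harnack inequality
\[
2\Delta u - |\nabla u|^2 + R - \frac{2n}{\tau} \le \frac{n}{4}
\]
eliminates the Laplacian and gives, after cancellation,
\[
2\frac{\partial u}{\partial \tau} + |\nabla u|^2 + 2u \le R + \frac{n}{4} + \frac{2n}{\tau}.
\]

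Next I would take any smooth space-time path $\gamma(\tau)$ joining $(x_2,\tau_2)$ to $(x_1,\tau_1)$ with $0<\tau_2<\tau_1$, and compute $\frac{du}{d\tau} = \partial_\tau u + \nabla u \cdot \dot\gamma$ along it. The key algebraic trick, identical to the one employed for Theorem \ref{classmain1}, is the elementary completion of squares
\[
-\tfrac{1}{2}|\nabla u|^2 + \nabla u \cdot \dot\gamma \le \tfrac{1}{2}|\dot\gamma|^2.
\]
Combining this with the previous inequality yields
\[
\frac{du}{d\tau} + u \le \frac{1}{2}\left(|\dot\gamma|^2 + R + \frac{n}{4} + \frac{2n}{\tau}\right),
\]
which rearranges to
\[
\frac{d}{d\tau}\bigl(e^{\tau} u\bigr) \le \frac{e^{\tau}}{2}\left(|\dot\gamma|^2 + R + \frac{n}{4} + \frac{2n}{\tau}\right).
\]

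Finally I would integrate this ODE inequality from $\tau_2$ to $\tau_1$, take the infimum over admissible paths $\gamma$, and change variables back via $t = T - \tau$; substituting $u = -\ln f$ then produces the asserted classical Harnack inequality. I do not anticipate any real obstacle: the whole argument is a routine transcription of the proof of Theorem \ref{classmain1}, and the only place where the hypothesis $R \ge 0$ is used is indirectly, through its role in Theorem \ref{main3}.
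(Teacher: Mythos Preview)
Your proposal is correct and is exactly the approach the paper intends: the paper does not give a separate proof of Theorem~\ref{classmain3} but explicitly presents it as a direct counterpart to Theorem~\ref{classmain1}, obtained by the same integration-along-paths argument with the Harnack bound of Theorem~\ref{main3} and potential $R$ in place of $2R$. Your verification of the algebra (the elimination of $\Delta u$, the completion of squares, and the integrating factor $e^{\tau}$) is accurate and nothing further is needed.
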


\vspace{0.5em}

In the rest of this section, we will finish the proof of Theorem
\ref{main3b}. The interesting feature of Theorem \ref{main3b}
is that the differential Harnack inequalities
hold \emph{without} any assumption on the scalar curvature $R$.
\begin{proof}[Proof of Theorem \ref{main3b}]
We first compute that $v$ satisfies
\begin{equation}\label{conjeq2b}
\frac{\partial}{\partial \tau}v=\Delta v-|\nabla
v|^2+R-\frac{n}{2\tau}-\left(v+\frac n2\ln(4\pi\tau)\right).
\end{equation}
If we let
\[
\tilde{P}:=2\Delta v-|\nabla v|^2+R-2\frac n\tau,
\]
then by adapting  equation (3.7) in \cite{[Caox]}, we have
\begin{equation*}
\begin{aligned}
\frac{\partial}{\partial \tau}\tilde{P}
&=\Delta \tilde{P}-2\nabla \tilde{P}\cdot \nabla
v-\frac{2}{\tau}\tilde{P}-\frac{2}{\tau}|\nabla v|^2-2\frac{R}{\tau}\\
&\quad-2\left|\nabla_i\nabla_jv+R_{ij}-\frac{1}{\tau}g_{ij}\right|^2
-2(\Delta v-|\nabla v|^2).
\end{aligned}
\end{equation*}
Since $P=\tilde{P}-\frac{n}{\tau}$, we have
\begin{equation}
\begin{aligned}\label{conjl1bch}
\frac{\partial}{\partial \tau}P&=\Delta P-2\nabla P\cdot \nabla
v-\frac{2}{\tau}P-\frac{2}{\tau}|\nabla v|^2-2\frac{R}{\tau}-\frac{n}{\tau^2}\\
&\quad-2\left|\nabla_i\nabla_jv+R_{ij}-\frac{1}{\tau}g_{ij}\right|^2
-2(\Delta v-|\nabla v|^2).
\end{aligned}
\end{equation}
According to the definition of $P$, we have
\[
-2a(\Delta v-|\nabla v|^2)=-2P+2\left(\Delta
v+R-\frac{n}{\tau}\right)-\frac{4n}{\tau}.
\]
Substituting this into \eqref{conjl1bch}, we get
\begin{equation}
\begin{aligned}\label{Rchange}
\frac{\partial}{\partial\tau}P&\leq\Delta P-2\nabla P\cdot\nabla
v-\left(\frac{2}{\tau}+2\right)P-\frac{2}{\tau}|\nabla v|^2
-2\frac{R}{\tau}-\frac{n}{\tau^2}\\
&\quad-\frac 2n\left(\Delta v+R-\frac{n}{\tau}\right)^2
+2\left(\Delta v+R-\frac{n}{\tau}\right)-\frac{4n}{\tau}\\
&=\Delta P-2\nabla P\cdot \nabla
v-\left(\frac{2}{\tau}+2\right)P-\frac{2}{\tau}|\nabla v|^2
-\frac{2}{\tau}\left(R+\frac{n}{2\tau}\right)\\
&\quad-\frac 2n\left(\Delta v+R-\frac{n}{\tau}-\frac
n2\right)^2-\frac{4n}{\tau}+\frac n2.
\end{aligned}
\end{equation}
Note that the evolution of scalar curvature under the Ricci flow is
\[
\frac{\partial R}{\partial t}=\Delta R+2|Ric|^2\geq \Delta R+\frac
2n R^2.
\]
Applying the maximum principle to this inequality yields
\[
R\geq -\frac{n}{2t}.
\]
Since $t\geq T/2$, then $1/t\leq 1/\tau$. Hence
\[
R\geq -\frac{n}{2t}\geq -\frac{n}{2\tau},
\]
that is,
\[
R+\frac{n}{2\tau}\geq 0.
\]
Combining this with \eqref{Rchange}, we have
\[
\frac{\partial}{\partial\tau}P\leq\Delta P-2\nabla
P\cdot \nabla v-\left(\frac{2}{\tau}+2\right)P
-\frac{4n}{\tau}+\frac n2.
\]
Adding $-\frac n4$ to $P$, we get
\begin{equation}
\begin{aligned}\label{conjevolch}
\frac{\partial}{\partial \tau}\left(P-\frac n4\right)
&\leq\Delta \left(P-\frac n4\right)
-2\nabla\left(P-\frac n4\right)\cdot\nabla v
-\left(\frac{2}{\tau}+2\right)\left(P-\frac
n4\right)-\frac{9n}{2\tau}.
\end{aligned}
\end{equation}
It is easy to see that $P-\frac n4<0$ for $\tau$ small enough.
Applying the maximum principle to the
evolution formula \eqref{conjevolch} yields
\[
P-\frac n4\leq 0
\]
for all time $t\geq T/2$. Hence the theorem is completely proved.
\end{proof}

\begin{remark}
Motivated by Theorems \ref{maintyp2} and \ref{classmain3}, we can
prove similar theorems by the standard argument from Theorem
\ref{main3b}. We omit them in the interests of brevity.
\end{remark}

% --------------------------------------------------------------------------
% ------------------------------------------------------------------------

\section{Gradient estimates for nonlinear (backward) heat equations}\label{sec5}

In this section, on one hand we consider the positive solution $f(x,t)<1$
to the nonlinear heat equation without any potential
\begin{equation}\label{nheat1}
\frac{\partial}{\partial t}f=\Delta f-f\ln f,
\end{equation}
with the metric evolved by the Ricci flow \eqref{RF} on a closed
manifold $M$. This equation has been considered by
S.-Y. Hsu \cite{[Hsu]} and L. Ma~\cite{[Ma2]}.
If we let $u=-\ln f$, then
\begin{equation}\label{ufach}
\frac{\partial}{\partial t}u=\Delta u-|\nabla u|^2-u
\end{equation}
and $u>0$. Note that $0<f<1$ is preserved as time $t$ evolves.
In fact the initial assumption says that
\[
-\ln\sup_M f(x,0)\leq u(x,0)\leq-\ln\inf_M f(x,0).
\]
Applying the maximum principle to \eqref{ufach}, we have
\[
-\ln\sup_M f(x,0)e^{-t}\leq u(x,t)\leq-\ln\inf_M f(x,0)e^{-t}
\]
and hence
\[
0<u(x,t)\leq-\ln\inf_M f(x,0)
\]
for all $x\in M$ and $t\in[0,T)$. Since $u=-\ln f$, this implies
\[
0<\inf_M f(x,0)\leq f(x,t)<1
\]
for all $x\in M$ and $t\in[0,T)$.

Following the arguments of X. Cao and R. Hamilton's paper
\cite{[CaoxHa]}, we let
\[
H=|\nabla u|^2-\frac ut.
\]
Comparing with the equation (5.3) in the same reference, we have
\begin{equation}
\begin{aligned}\label{kkehear}
\frac{\partial}{\partial t}H&=\Delta H-2\nabla H\cdot \nabla
u-\frac1tH-2|\nabla\nabla u|^2-2|\nabla u|^2+\frac ut \\
&=\Delta H-2\nabla H\cdot \nabla
u-\left(\frac1t+1\right)H-2|\nabla\nabla u|^2-|\nabla u|^2.
\end{aligned}
\end{equation}
Notice that if $t$ small enough, then $H<0$. Then applying the
maximum principle to \eqref{kkehear}, we obtain:
\begin{theorem}\label{heatmain1}
Let $(M,g(t))$, $t\in[0,T)$, be a solution to the Ricci flow on a
closed manifold. Let $f<1$ be a positive solution to the nonlinear
heat equation \eqref{nheat1}, $u=-\ln f$ and
\[
H=|\nabla u|^2-\frac ut.
\]
Then, for all time $t\in(0,T)$,
\[
H\leq 0.
\]
\end{theorem}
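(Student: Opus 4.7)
The plan is to apply the parabolic maximum principle directly to the evolution inequality \eqref{kkehear}, using the positivity and bounds on $u$ established just before the theorem.

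First, I would verify that $H<0$ for small positive $t$. The hypothesis $f<1$ on the closed manifold $M$ gives $u(\cdot,0)\geq c$ for some constant $c>0$; the maximum principle applied to \eqref{ufach} promotes this to $u(x,t)\geq c e^{-t}$, so $u/t\to+\infty$ uniformly as $t\to 0^+$. Since $|\nabla u|^2$ stays bounded on any slab $M\times[0,t_0]$ by short-time parabolic regularity on a closed manifold, this forces $H=|\nabla u|^2-u/t<0$ pointwise on $M$ at some sufficiently small time $t_0>0$.

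Second, I would run the first-touching-time argument on $M\times[t_0,T)$. Discarding the manifestly nonpositive reaction $-2|\nabla\nabla u|^2-|\nabla u|^2$ on the right of \eqref{kkehear} yields the linear parabolic inequality
\[
\frac{\partial H}{\partial t}\leq \Delta H-2\nabla H\cdot\nabla u-\Bigl(\frac{1}{t}+1\Bigr)H.
\]
If $H$ ever reached $0$ on $M\times(t_0,T)$, let $(x_1,t_1)$ be the earliest such point. Then $\partial_t H(x_1,t_1)\geq 0$, $\nabla H(x_1,t_1)=0$, and $\Delta H(x_1,t_1)\leq 0$; substituting these into the full evolution \eqref{kkehear} forces equality throughout, so $|\nabla u(x_1,t_1)|=0$. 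But then $H(x_1,t_1)=-u(x_1,t_1)/t_1<0$ by positivity of $u$, contradicting $H(x_1,t_1)=0$. Letting $t_0\downarrow 0$ closes the argument on $M\times(0,T)$.

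The argument is essentially routine and presents no serious obstacle; the only real care goes into checking that $|\nabla u|^2$ remains bounded near $t=0$ so that the initial-time negativity of $H$ is legitimate, and into reading off the sign information from \eqref{kkehear} at the first-touching point. Both are standard on a closed manifold.
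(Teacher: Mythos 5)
Your proposal is correct and follows essentially the same route as the paper: derive the evolution equation \eqref{kkehear}, observe that $H<0$ for small $t$ (since $u\geq -\ln(\sup_M f(\cdot,0))e^{-t}>0$ forces $u/t\to\infty$ while $|\nabla u|^2$ stays bounded), and conclude by the parabolic maximum principle after discarding the nonpositive terms $-2|\nabla\nabla u|^2-|\nabla u|^2$. The paper leaves these steps implicit, and your first-touching-time argument (using $u>0$ to rule out $|\nabla u|=0$ at a touching point) is a valid way to fill them in.
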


\begin{remark}
Theorem \ref{heatmain1} can be regarded as a nonlinear version of
X. Cao and R. Hamilton's result (see Theorem 5.1 in \cite{[CaoxHa]}).
Recently, L. Ma has proved the same estimate as in Theorem
\ref{heatmain1} on a closed manifold with nonnegative Ricci curvature under
the static metric (see Theorem 3 in~\cite{[Ma2]}). However, in our case,
we do not need any curvature assumption.
\end{remark}

\vspace{0.5em}

On the other hand, we can also consider the positive solution
$f(x,t)<1$ to the nonlinear backward heat equation without any
potential
\begin{equation}\label{nheat2}
\frac{\partial}{\partial t}f=-\Delta f+f\ln f,
\end{equation}
with the metric evolved by the Ricci flow \eqref{RF}.
Let $u=-\ln f$. Then we have
\[
\frac{\partial}{\partial \tau}u=\Delta u-|\nabla u|^2-u
\]
and $u>0$. Using the maximum principle, one can see that
$0<f<1$ is also preserved under the Ricci flow.
In fact from the initial assumption
\[
0<\inf_M f(x,T)\leq f(x,T)\leq\sup_M f(x,T)<1,
\]
one can also show that
\[
0<\inf_M f(x,T)\leq f(x,\tau)<1
\]
for all $x\in M$ and $\tau\in(0,T]$ in the same way as the above arguments.

Following the arguments of X. Cao's paper \cite{[Caox]}, let
\[
H=|\nabla u|^2-\frac u\tau.
\]
Comparing with the equation (5.3) in \cite{[Caox]}, we have
\begin{equation}
\begin{aligned}\label{kkeheard}
\frac{\partial}{\partial \tau}H&=\Delta H-2\nabla H\cdot \nabla
u-\frac1\tau H-2|\nabla\nabla u|^2-4R_{ij}u_iu_j-2|\nabla
u|^2+\frac u\tau \\
&=\Delta H-2\nabla H\cdot \nabla u-\left(\frac1\tau+1\right)
H-2|\nabla\nabla u|^2-4R_{ij}u_iu_j-|\nabla u|^2.
\end{aligned}
\end{equation}
If we assume $R_{ij}(g(t))\geq -K$, where $0\leq K\leq \frac 14$,
then
\[
-4R_{ij}u_iu_j-|\nabla u|^2\leq (4K-1)|\nabla u|^2\leq0.
\]
Hence if $\tau$ small enough, then $H<0$. Then applying the maximum
principle to \eqref{kkeheard}, we have a nonlinear version of
X. Cao's result (see Theorem 5.1 in \cite{[Caox]}).

\begin{theorem}\label{heatmain2}
Let $(M,g(t))$, $t\in[0,T]$, be a solution to the Ricci flow on a
closed manifold with the Ricci curvature satisfying
$R_{ij}(g(t))\geq -K$, where $0\leq K\leq \frac 14$. Let $f<1$ be a
positive solution to the nonlinear backward heat equation
\eqref{nheat2}, $u=-\ln f$, $\tau=T-t$ and
\[
H=|\nabla u|^2-\frac u\tau.
\]
Then, for all time $t\in[0,T)$,
\[
H\leq 0.
\]
\end{theorem}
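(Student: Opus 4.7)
The plan is to mirror the strategy used for Theorem \ref{heatmain1}, modified to handle the extra Ricci terms that appear because the time direction is reversed. First I would set $u=-\ln f$, noting that the hypothesis $0<f<1$ together with the maximum principle applied to the equation
\[
\frac{\partial}{\partial\tau}u=\Delta u-|\nabla u|^2-u
\]
forces $u>0$ on $M\times(0,T]$, which is what makes the Harnack quantity $H=|\nabla u|^2-u/\tau$ well-signed for $\tau$ small.

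Next I would compute the evolution of $H$ along the backward flow in $\tau$. Differentiating $|\nabla u|^2$ uses the Bochner formula, and because the metric evolves by the (forward) Ricci flow, the time derivative of $|\nabla u|^2=g^{ij}u_iu_j$ picks up a $+2R_{ij}u_iu_j$ term from $\frac{\partial}{\partial t}g^{ij}=2R^{ij}$, which becomes $-2R_{ij}u_iu_j$ after converting to the $\tau$-derivative; combined with the Bochner $-2R_{ij}u_iu_j$ from commuting derivatives, one gets a net $-4R_{ij}u_iu_j$. Together with the $-u$ term in the equation for $u$ (contributing $+u/\tau$ after differentiating $-u/\tau$ and combining with $\frac{\partial}{\partial\tau}(u/\tau)$), this reproduces the evolution equation \eqref{kkeheard} displayed in the excerpt, namely
\[
\frac{\partial}{\partial\tau}H=\Delta H-2\nabla H\cdot\nabla u-\left(\frac{1}{\tau}+1\right)H-2|\nabla\nabla u|^2-4R_{ij}u_iu_j-|\nabla u|^2.
\]

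The next step is to absorb the $-4R_{ij}u_iu_j$ term. Using the hypothesis $R_{ij}\geq -K$ with $0\leq K\leq 1/4$, one has
\[
-4R_{ij}u_iu_j-|\nabla u|^2\leq 4K|\nabla u|^2-|\nabla u|^2=(4K-1)|\nabla u|^2\leq 0,
\]
and the Bochner term $-2|\nabla\nabla u|^2$ is also non-positive. Therefore the inequality reduces to
\[
\frac{\partial}{\partial\tau}H\leq\Delta H-2\nabla H\cdot\nabla u-\left(\frac{1}{\tau}+1\right)H,
\]
which has $H\equiv 0$ as a super-solution.

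Finally, I would check the initial-time sign condition: since $u$ is bounded (by the argument above $u\leq-\ln\inf_M f(x,T)$) while $u/\tau\to\infty$ as $\tau\to 0^+$, we have $H(\cdot,\tau)<0$ for all sufficiently small $\tau>0$. The parabolic maximum principle on the closed manifold then propagates this sign forward in $\tau$: the coefficient $-(1/\tau+1)$ in front of $H$ is negative, so at a first hypothetical maximum point where $H=0$ one would have $\partial_\tau H\leq 0$ with the other terms all $\leq 0$, contradicting the transition from negative to positive. Hence $H\leq 0$ for all $\tau\in(0,T]$, i.e., for all $t\in[0,T)$. The only subtle step is the sign condition $K\leq 1/4$, which is exactly what is needed to cancel the $-|\nabla u|^2$ contribution coming from the nonlinear term $f\ln f$; without it, the argument breaks down and one would need an additional good term to compensate.
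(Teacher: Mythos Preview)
Your proposal is correct and follows essentially the same approach as the paper: derive the evolution equation \eqref{kkeheard} for $H$, use the bound $R_{ij}\geq -K$ with $K\leq 1/4$ to make $-4R_{ij}u_iu_j-|\nabla u|^2\leq 0$, observe $H<0$ for small $\tau$, and conclude by the maximum principle. Your additional explanation of where the $-4R_{ij}u_iu_j$ term originates (metric variation plus Bochner commutation) is more explicit than the paper, which simply cites the analogous computation in \cite{[Caox]}, but the argument is the same.
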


% ------------------------------------------------------------------------
\section*{Acknowledgments}
The author would like to express his gratitude to the referee for careful
readings and many valuable suggestions.

% --------------------------------------------------------------------------

\end{document}